\documentclass[10pt,letterpaper]{article}
\usepackage[T1]{fontenc}
\usepackage{lmodern}
\usepackage{amsmath,amssymb,amsthm,mathtools}
\usepackage{microtype}
\usepackage{enumitem}
\usepackage[hidelinks]{hyperref}
\usepackage{bookmark}
\input{glyphtounicode}
\hypersetup{
 pdftitle={Existence of a positive hyperbolic orbit in three-dimensional Reeb flows},
 pdfauthor={Taisuke Shibata},
 pdfsubject={Working manuscript: embedded contact homology, Reeb flows and compactness},
 pdfkeywords={Reeb flow, positive hyperbolic orbit, embedded contact homology, U-map}}
\allowdisplaybreaks[2]
\setlist[enumerate]{itemsep=2pt,topsep=4pt}
\newtheorem{theorem}{Theorem}[section]
\newtheorem{lemma}[theorem]{Lemma}
\newtheorem{proposition}[theorem]{Proposition}
\newtheorem{corollary}[theorem]{Corollary}
\theoremstyle{definition}

\theoremstyle{remark}

\newcommand{\R}{\mathbb{R}}
\newcommand{\Z}{\mathbb{Z}}
\newcommand{\F}{\mathbb{F}}

\newcommand{\A}{A}
\newcommand{\Ccal}{\mathcal{C}}
\newcommand{\Mcal}{\mathcal{M}}
\newcommand{\Gcal}{\mathcal{G}}
\newcommand{\Bcal}{\mathcal{B}}
\newcommand{\eps}{\epsilon}
\newcommand{\ind}{\operatorname{ind}}

\newcommand{\Vol}{\operatorname{Vol}}
\newcommand{\PD}{\operatorname{PD}}

\newcommand{\ECH}{\operatorname{ECH}}
\newcommand{\ECC}{\operatorname{ECC}}
\newcommand{\HM}{\operatorname{HM}}
\newcommand{\NH}{\textup{(NH)}}

\title{Existence of a positive hyperbolic orbit in three-dimensional Reeb flows}
\author{Taisuke SHIBATA\thanks{Research Institute for Mathematical Sciences,
Kyoto University, Kyoto 606-8502, Japan.
E-mail address: \texttt{shibata@kurims.kyoto-u.ac.jp}.}}
\date{September 5, 2026}
\begin{document}
\maketitle

\begin{abstract}
Non-degenerate periodic orbits in three-dimensional Reeb flows are
classified into three types: positive hyperbolic, negative hyperbolic and
elliptic. In the present paper, we consider a closed connected contact
three-manifold with a non-degenerate contact form. We show that its Reeb
flow has a simple positive hyperbolic orbit if it has at least three
simple periodic orbits. We mainly study the case in which no elliptic
orbit exists. We prove that there is no  non-degenerate contact form on a closed
connected three-manifold with $b_1=0$ such that all simple periodic orbits are negative hyperbolic. As a corollary, by combining  the author's previous result in the presence of an
elliptic orbit and the known result for $b_1>0$, we obtain the main result.
The proof uses the Weyl law for ECH spectral invariants. We also use
compactness for genus zero $J$-holomorphic curves counted by the $U$-map.
Under the contrary assumption, the number of simple orbits in each action
interval $[L,2L]$ is uniformly bounded with respect to $L$. We use this property to study ECH
generators and genus zero $U$-curves.
\end{abstract}

\clearpage
\tableofcontents

\section{Introduction}
\subsection{Background and the statement of the main result}

Let $(Y,\lambda)$ be a closed connected contact three-manifold. We write
$\xi=\ker\lambda$. The Reeb vector field $X_\lambda$ is determined by
\[
 \lambda(X_\lambda)=1,\qquad d\lambda(X_\lambda,\cdot)=0.
\]
A periodic Reeb orbit is a map $\gamma:\R/T\Z\to Y$ such that
$\dot\gamma=X_\lambda(\gamma)$, where $T>0$. It is simple if this map is an
embedding. We consider two periodic orbits to be equivalent if they
define the same current.

Let $\phi^t$ denote the Reeb flow. A periodic orbit $\gamma$ of period $T$
is non-degenerate if
\[
 d\phi^T|_{\xi_{\gamma(0)}}:\xi_{\gamma(0)}\longrightarrow\xi_{\gamma(0)}
\]
has no eigenvalue equal to $1$. We call $\lambda$ non-degenerate if every
periodic orbit is non-degenerate.  A non-degenerate orbit is positive hyperbolic if the
eigenvalues of its linearized return map are positive real numbers. It is
negative hyperbolic if they are negative real numbers. It is elliptic if
they lie on the unit circle. 

The three-dimensional Weinstein conjecture states that every contact
form on a closed three-manifold has a periodic Reeb orbit.
Taubes proved this conjecture in \cite{TW}.
Hutchings and Taubes proved that there are at least two simple Reeb
orbits under the non-degeneracy assumption
\cite{HT3}.
Cristofaro-Gardiner and Hutchings removed this assumption
\cite{CH}.

We next recall the results on the number of simple periodic orbits.
Suppose that $\lambda$ is non-degenerate and $c_1(\xi)$ is torsion.
Cristofaro-Gardiner, Hutchings and Pomerleano proved that there are
either exactly two or infinitely many simple periodic orbits
\cite{CHP}.
Colin, Dehornoy and Rechtman removed the torsion assumption
\cite{CDR}.
Thus every non-degenerate contact form on a closed connected
three-manifold has either exactly two or infinitely many simple
periodic orbits.
In particular, if there are at least three simple periodic orbits,
then there are infinitely many simple periodic orbits.

The case of exactly two simple periodic orbits is also understood.
Hutchings and Taubes proved that, for a non-degenerate contact form,
both orbits are elliptic and $Y$ is $S^3$ or a lens space
\cite{HT3}.
They also proved that a non-degenerate contact form has exactly two
simple periodic orbits if all its simple periodic orbits are elliptic
\cite{HT3}.

Therefore, if a non-degenerate contact form has infinitely many simple
periodic orbits, at least one of them is hyperbolic. In addition,  we can realize a Reeb flow such that all simple orbits are positive hyperbolic as a contact Anosov flow.
As a refinement of the Weinstein conjecture, 
Cristofaro-Gardiner, Hutchings and Pomerleano asked the following
question \cite[Question 1.8]{CHP}.
Suppose that $Y$ is neither $S^3$ nor a lens space.
Does every non-degenerate contact form on $Y$ have a simple positive
hyperbolic orbit?
They proved that such an orbit exists when $b_1(Y)>0$
\cite[Proposition 1.9]{CHP}.

We call a non-degenerate Reeb flow strongly non-degenerate if
all intersections between the stable and unstable manifolds
of hyperbolic periodic orbits are transverse.
Colin, Dehornoy and Rechtman studied this question under this
assumption \cite[Section 4.5]{CDR}.
They proved the existence of a simple positive hyperbolic orbit
in several cases.

In joint work with Asaoka, the author proved the following result
\cite[Theorem 1.1]{AS}.
Suppose that a Reeb flow on a closed connected contact three-manifold
is strongly non-degenerate and has at least three simple periodic orbits.
Then it has infinitely many simple positive hyperbolic periodic orbits.

In this paper, we do not assume that the stable and unstable
manifolds intersect transversely.
We prove the existence of at least one simple positive hyperbolic orbit, which gives the affirmative answer to \cite[Question 1.8]{CHP}.

The main result is the following theorem.

\begin{theorem}\label{thm:main}\label{cor:main}
Let $(Y,\lambda)$ be a closed connected non-degenerate contact
three-manifold.
If $(Y,\lambda)$ has at least three simple periodic Reeb orbits,
then it has a simple positive hyperbolic periodic Reeb orbit.
\end{theorem}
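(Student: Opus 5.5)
The plan is a case split along the lines indicated in the abstract, with the one new case handled by ECH. First, by \cite{CHP} and \cite{CDR}, three simple orbits imply infinitely many. If $b_1(Y)>0$, \cite[Proposition 1.9]{CHP} already gives a simple positive hyperbolic orbit, so we may assume $b_1(Y)=0$. If some simple orbit is elliptic, we invoke the author's earlier result for non-degenerate flows containing an elliptic orbit and infinitely many simple orbits. That leaves one case: $b_1(Y)=0$, no elliptic orbits, and, arguing by contradiction, no positive hyperbolic orbits. Every simple orbit is then negative hyperbolic, and the main work is to show this cannot happen.

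Under this assumption I would first record the structural consequences. An even multiple of a negative hyperbolic orbit is positive hyperbolic, hence not an ECH generator, so ECH generators are products $\prod \gamma_i$ of distinct simple negative hyperbolic orbits with multiplicity one. Moreover the Conley--Zehnder index of $\gamma^k$ is $k$ times an odd number, and the ECH index parity of a generator is the number of positive hyperbolic factors, which is zero. So every generator has even grading, the ECH differential vanishes, and $\ECC=\ECH$. Since $b_1=0$ and $c_1(\xi)$ is torsion, ECH in each class, via Taubes' isomorphism with $\HM$, has a $U$-tower of nonzero classes $\sigma_k$ with $U\sigma_{k+1}=\sigma_k$. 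The Weyl law gives $c_{\sigma_k}(\lambda)^2/k\to 2\Vol(Y,\lambda)$, so the actions grow like $\sqrt{k}$.

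The next step is counting. Each $\sigma_k$ is represented by a single generator, since the differential is zero and the chain complex equals homology, so we have an injection of towers into generators. I would extract the key property quoted in the abstract: the number of simple orbits with action in $[L,2L]$ is bounded uniformly in $L$. The argument would use the $U$-map. Each $U$-curve from $\sigma_{k+1}$ to $\sigma_k$ has ECH index $2$. With all orbits negative hyperbolic and all multiplicities one, the ECH index, the Fredholm index and the partition conditions force these curves to be very constrained, and I would aim to show they are genus zero with few ends. Comparing the growth of ECH index against action, with $\ind\sim k$ and action $\sim\sqrt{k}$, and exploiting that a generator with many orbits of action about $L$ has index growing too fast, should yield the uniform bound.

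The final step, which I expect to be the main obstacle, is deriving the contradiction. My first attempt would be compactness for the genus zero $U$-curves through a fixed generic point as $k\to\infty$. After rescaling, these curves should converge to something whose asymptotics involve only boundedly many orbits per action window. That would force the generators $\sigma_k$ to share orbits in a way incompatible either with the linear growth of the ECH index or with the Weyl-law volume asymptotics. Concretely, a bounded number of orbits in each window $[L,2L]$ gives at most about $2^{C\log L}$, polynomially many, generators of action at most $L$ in each window. The Weyl law requires roughly $L^2$ distinct tower classes of action at most $L$, which is polynomial too, so a sharper index computation is needed. I would try to show that the index of a product of orbits with actions in dyadic windows is controlled linearly in action. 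That would make index $\sim L^2$ unattainable and give the contradiction. Controlling the degeneration of the $U$-curves in this limit is where I expect the difficulty to lie.
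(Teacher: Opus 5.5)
Your reduction matches the paper's proof exactly. The case $b_1(Y)>0$ is handled by Cristofaro-Gardiner--Hutchings--Pomerleano. Three simple orbits force infinitely many. The elliptic case is the author's earlier result. What remains is to rule out $b_1(Y)=0$ with every simple orbit negative hyperbolic.

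That remaining case is Theorem~\ref{thm:negative}. It is new, it carries the whole proof, and your proposal does not prove it. Two steps are missing.

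\textbf{First gap: the dyadic bound.} Your route (showing the $U$-curves are genus zero with few ends, then comparing index growth) is not an argument. Nothing forces every $U$-curve to have genus zero and few ends; the paper must treat genus-one steps and steps with large $J_0$ separately. The bound is in fact purely combinatorial:
\begin{enumerate}
\item Since $\partial=0$ and $R_\Gamma$ is finite-dimensional, all but finitely many generators in each class lie in the tower. You only used the injection of the tower into generators, which gives a lower bound; with both bounds, the number $Q(L)$ of generators of action $<L$ satisfies $Q(L)\sim cL^2$.
\item Adjoining an orbit with $T_\gamma\in[x,2x]$ to a generator of action $<x$ is injective into generators of action $<3x$. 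Hence $r(x)Q(x)\le Q(3x)$, and $r(x)$ is bounded (Lemma~\ref{lem:dyadic}).
\end{enumerate}

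\textbf{Second gap: the contradiction itself.} Your plan to show that the index of a product of orbits is linear in action just restates the conclusion $2K=O(\A(\alpha_K))$. No static index formula gives it, because the $Q_\tau$ linking terms are a priori quadratic in action. Compactness of $U$-curves as $k\to\infty$ is also unavailable, since the Hofer energy $\A(\alpha_k)\to\infty$.

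The paper instead works step by step along a $U$-sequence $\alpha_0,\dots,\alpha_K$ with $L=\A(\alpha_K)$. Write $E_k$ for the action drop at step $k$ and $j_k=J_0=2g_k-2+e_k\ge-1$.
\begin{enumerate}
\item Summing Proposition~\ref{prop:ij} gives $\bigl|\sum_k(j_k-2)\bigr|\le C_*L$. The energy budget $\sum_kE_k\le L$ allows at most $L/\eps$ steps with $E_k\ge\eps$.
\item A step with $E_k<\eps$ and $j_k\le2$ is either a genus-one exchange $\gamma_+\to\gamma_-$ with $0<T_{\gamma_+}-T_{\gamma_-}<\eps$, or has genus zero and at most four ends.
\item The genus-zero steps need the key Lemma~\ref{lem:common}. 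Fix the pair $(\alpha,\beta)$ and move the base point onto a common orbit $\eta$. The nontrivial components avoid $\R\times\eta$, so they break into two index-one pieces joined along $\eta^2$; the case $\eta^1$ is excluded by ECH index parity. This forces $0<\A(P')-\A(M')-2T_\eta<\eps$, hence $|\alpha\cap\beta|\le16B$, so there are only $O((\log L)^{M_0})$ such steps.
\item It follows that the number $D$ of non-exchange steps with $j_k\le2$ is $O(L)$. Then $\sum_k(j_k-2)_+\le C_*L+3D$ bounds the total weight $W=\sum(j_k+2)$ over all non-exchange steps by $O(L)$.
\item Exchanges are counted by the potential $\Phi(\alpha)=\sum_{\gamma\in\alpha}h(\gamma)$. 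The weights $1\le h\le B$ are assigned on $\eps$-clusters, which have at most $B$ elements by the dyadic bound. $\Phi(\alpha_k)-\Phi(\alpha_{k-1})\ge1$ at each exchange, $\Phi$ changes by at most $B(j_k+2)$ at other steps, and $\Phi\le BN(L)=O(\log L)$. So there are at most $BN(L)+BW$ exchanges.
\end{enumerate}
Together these give $K=O(L)$, contradicting $K\sim L^2/(2\Vol(Y,\lambda))$ from the Weyl law. None of these ingredients appears in your plan.
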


Together with the results above, Theorem~\ref{thm:main} gives the
following result.
A non-degenerate contact form on a closed connected three-manifold
has either exactly two or infinitely many simple periodic orbits.
In the first case, both orbits are irrationally elliptic, and $Y$ is
$S^3$ or a lens space.
In the second case, at least one simple periodic orbit is positive
hyperbolic.

In \cite{S}, the author studied the case $b_1(Y)=0$.
Suppose that the flow has infinitely many simple periodic orbits and
at least one elliptic orbit.
Then it has a simple positive hyperbolic orbit.
Thus, to prove Theorem~\ref{thm:main}, it remains to exclude the case
in which $b_1(Y)=0$ and all simple periodic orbits are negative
hyperbolic.
The main step is to prove  the following theorem.

\begin{theorem}\label{thm:negative}
Let $Y$ be a closed connected three-manifold with $b_1(Y)=0$.
Then there is no non-degenerate contact form on $Y$
whose simple periodic orbits are all negative hyperbolic.
\end{theorem}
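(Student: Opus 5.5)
We argue by contradiction. Suppose $\lambda$ is non-degenerate on $Y$ with $b_1(Y)=0$ and every simple orbit is negative hyperbolic. Because $b_1(Y)=0$, $H_1(Y)$ is finite, so $c_1(\xi)$ is torsion and ECH carries a relative $\Z$-grading in each class $\Gamma\in H_1(Y)$. The $U$-map is then defined, and so are the ECH spectral invariants $c_k(Y,\lambda)$. The Weyl law
\[
\lim_{k\to\infty}\frac{c_k(Y,\lambda)^2}{k}=2\Vol(Y,\lambda)
\]
is available. Since the orbits are hyperbolic, every orbit set is built from simple orbits with multiplicity, and a negative hyperbolic orbit can appear only with multiplicity $1$ in an ECH generator. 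Even iterates of a negative hyperbolic orbit are positive hyperbolic orbits, but they are not simple. Hence the ECH generators are exactly the finite sets of distinct simple orbits, each taken once, and they contain no elliptic factors.

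\textbf{Step 1: sparsity of orbits.} Using the Weyl law, I will show that the number of simple orbits with action in $[L,2L]$ is bounded by a constant $C$ independent of $L$. The mechanism is as follows. The $U$-map lowers the grading by $2$, and a $U$-tower $\sigma_k$ with $U\sigma_{k+1}=\sigma_k$ has actions $c_k\sim\sqrt{2\Vol\, k}$. For negative hyperbolic orbits the Conley--Zehnder indices of the iterates grow linearly in the iteration number, with odd index on each simple orbit. I will then compare the ECH index of generators, through $c_\tau$, $Q_\tau$ and $CZ^I_\tau$, with their action. Each generator is a set of distinct orbits, so the ECH index can become quadratic in action only through the relative self-linking term $Q_\tau$. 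If many orbits had action in $[L,2L]$, there would be too many generators in an index range, and an index/action comparison against the Weyl asymptotics would give a contradiction.

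I will also use the parity: the ECH index parity of a generator is the number of positive hyperbolic factors mod $2$, which here is $0$. So all generators have even grading and the ECH differential vanishes. Thus $\ECH$ equals the chain complex, and since $\ECH$ is isomorphic to $\widehat{\HM}$, which has finite rank in each degree with a $U$-tower structure, each sufficiently large grading contains exactly rank-many generators. This rigidity is what should bound the number of orbits per dyadic action window.

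\textbf{Step 2: genus zero $U$-curves.} With the sparsity in hand, I will take the $U$-map between consecutive generators in a tower, $U\alpha_{k+1}=\alpha_k$. These are counted by ECH index $2$ curves through a fixed point $z$. I will show that for large $k$ these curves can be taken to have genus zero, using the ECH index inequality together with the bounded count of ends from Step 1. Passing to the limit $k\to\infty$ with rescaled action and using genus zero compactness, I expect to extract a contradiction with the fact that every end is at a negative hyperbolic orbit. This could come from the asymptotic winding of ends: negative hyperbolic orbits have half-integer rotation, which forces specific even/odd winding numbers. Alternatively, the limit could produce a global surface of section or a broken book, whose binding or return map must contain a positive hyperbolic or elliptic orbit.

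\textbf{Main obstacle.} The hardest part is the compactness step: controlling genus zero $U$-curves with unbounded energy so that a limiting object exists and yields a contradiction. I would first try to bound the number and multiplicity of ends using Step 1, then apply a Gromov-type compactness for fixed-genus curves with a bounded number of punctures.
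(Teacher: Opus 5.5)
Your Step 1 can be made to work, but the mechanism is an action count rather than a comparison of $I$ with action through $Q_\tau$. Since $\partial_J=0$ and ECH is a single $U$-tower plus a finite-dimensional piece, there is exactly one generator in each large tower degree. The Weyl law then gives $Q(L)\sim cL^2$, where $Q(L)$ is the number of generators of action less than $L$. Let $r(x)$ count the simple orbits with action in $[x,2x]$. The injection $(\alpha,\gamma)\mapsto\alpha\cup\{\gamma\}$ for $\A(\alpha)<x\le T_\gamma\le 2x$ gives $r(x)Q(x)\le Q(3x)$. Since $Q(3x)/Q(x)\to 9$, $r(x)$ is uniformly bounded by some $B$, and $N(L)=O(\log L)$.

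The genuine gap is Step 2. Step 1 does not bound the number of ends of a $U$-curve, since $N(L)$ is unbounded. Nothing you have forces the curves between consecutive tower generators to have genus zero. The estimate $|I-J_0|\le C_*\A(\alpha)$ controls only the sum $\sum_k(J_0(\Ccal_k)-2)=O(L)$, not the individual terms. So neither high-genus steps nor genus-one steps with two ends are excluded. Moreover, the limit $k\to\infty$ has unbounded energy, so no compactness theorem is available. You also give no mechanism by which winding numbers or a broken book would yield a contradiction. Supporting broken books exist for every non-degenerate Reeb flow, so you would still need a new argument to locate a positive hyperbolic or elliptic orbit in one.

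The missing idea is to replace the limit by a counting argument. Take a chain $U\alpha_k=\alpha_{k-1}$, $1\le k\le K$, with $L=\A(\alpha_K)$, and show $K=O(L)$. This contradicts $K\sim L^2/(2\Vol(Y,\lambda))$ from the Weyl law. To prove it, sort the steps:
\begin{itemize}
\item At most $L/\eps$ steps have energy at least $\eps$.
\item Steps with $J_0>2$ are paid for by the $O(L)$ budget above.
\item Small-energy genus-one steps with $J_0\le2$ exchange one orbit for another whose action is smaller by less than $\eps$. These are counted by the potential $\Phi(\alpha)=\sum_{\gamma\in\alpha}h(\gamma)$. Here $1\le h\le B$ orders orbits inside clusters of actions that are close to each other; these clusters have at most $B$ elements by Step 1. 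So $\Phi$ rises by at least $1$ at each such step, is at most $B\,N(L)$, and changes by at most $B(J_0+2)$ at the other steps.
\item The remaining steps have genus zero and at most four nontrivial ends.
\end{itemize}

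For the last class you need the key geometric lemma: a uniform bound on the number of trivial cylinders. Your winding idea enters only locally here. In genus zero the projection to $Y$ is embedded, so no orbit is both a positive and a negative end. Hence $J_0=d-2$ with $d=|\alpha\setminus\beta|+|\beta\setminus\alpha|$. Now move the base point $z$ onto a common orbit $\eta$. The curves break into two embedded index-one pieces. Because the genus is zero, these pieces are joined along a single cover $\eta^m$. ECH index parity rules out $m=1$, so $m=2$. This gives $0<\A(P')-\A(M')-2T_\eta<\eps$ for some $P'\subset\alpha\setminus\beta$ and $M'\subset\beta\setminus\alpha$, and there are at most $16$ such pairs. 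With Step 1 this yields $|\alpha\cap\beta|\le 16B$. So only $O((\log L)^{16B+4})$ generators can occur at such steps, and altogether $K=O(L)$.
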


 The main part of this paper is to give the proof of Theorem \ref{thm:negative}.

\subsection{Idea of the proof and the structure of this paper}

We prove Theorem~\ref{thm:negative} by contradiction. Suppose that all simple
periodic orbits are negative hyperbolic. Every ECH generator
then has even ECH parity. Thus the differential vanishes. Moreover, the
multiplicity of each orbit in an ECH generator is one. Thus the generators
can be regarded as finite subsets of the set of simple periodic orbits.

We first use the Weyl law and the structure of ECH on a rational homology
sphere. They imply that the number of ECH generators of action less than
$L$ grows quadratically in $L$. We then add one orbit to an ECH generator
and count the obtained generators. This gives a uniform bound on the
number of simple orbits with action in $[L,2L]$. In particular, the number
of simple orbits of action at most $L$ is $O(\log L)$.

The main geometric argument is Lemma~\ref{lem:common}. Consider a genus
zero $U$-curve with small $d\lambda$-area and $J_0\le2$. Its nontrivial
component has at most four ends. However, this does not bound the number
of trivial cylinders in the current.We choose one of these common orbits and move the base point
of the $U$-map towards it. We use  low ECH index
compactness theorem. A nontrivially broken limit has
two embedded nontrivial parts of index one. The genus zero condition then gives a unique
connection between the two nontrivial components. The ECH partition condition and the parity of the ECH index
show that this connection is a double cover of the chosen
negative hyperbolic orbit. This gives a relation between its action and
the actions of the ends of the $J$-holomorphic curve counted by the $U$-map.  By using  this relation, we obtain a bound on the number of trivial cylinders.

Fix a sufficiently small $\epsilon>0$.
We first order all simple periodic orbits by their actions.
We divide them into disjoint subsets in this order.
We compare each orbit with the next orbit.
If the difference between their actions is less than $\epsilon$,
we put them in the same subset.
Otherwise, we start a new subset.
The number of orbits in each subset is uniformly bounded.

In each subset, we give the orbits the numbers $1,2,\ldots$
in the order of their actions.
For each ECH generator, we take the sum of these numbers.

Next, we take ECH generators $\alpha_0,\ldots,\alpha_K$ such that
$U\alpha_k=\alpha_{k-1}$ for $1\le k\le K$.
Consider a step from $\alpha_k$ to $\alpha_{k-1}$.
Suppose that only one orbit is replaced by another orbit with smaller action.
Suppose also that the difference between their actions is less than $\epsilon$.
Then these two orbits are in the same subset.
The new orbit has a smaller number.
The other orbits do not change.
Thus the sum decreases by at least one.

At the other steps, the sum may increase.
We use the bound on the number of trivial cylinders
and the estimate for $I-J_0$.
These give an upper bound on the sum of such increases.

Put $L=A(\alpha_K)$.
We then obtain $K\le CL$, where $C$ does not depend on $K$.
On the other hand, the Weyl law gives
\[
 \frac{L^2}{2K}\longrightarrow \operatorname{Vol}(Y,\lambda)>0.
\]
This is a contradiction.

In Section~\ref{sec:preliminaries}, we recall the facts about ECH used
in this paper. We also explain its relation with monopole Floer cohomology
and the tower structure on a rational homology sphere.
In Section~\ref{sec:actions}, we prove the action estimates
and define the local weights. In Section~\ref{sec:genuszero}, we study the
ends of genus zero $U$-curves. In Section~\ref{sec:common}, we prove the
bound on common orbits. We explain the compactness argument in detail.
In Section~\ref{sec:linear}, we prove the linear bound for the length of
a $U$-sequence. Finally, in Section~\ref{sec:mainproof}, we prove
Theorem~\ref{thm:negative} and Theorem~\ref{thm:main}.

\subsection*{Acknowledgments}

I am grateful for the help of AI tools in the preparation of this paper.
They helped me express my ideas more clearly in English.
They also helped me refine the arguments and present the proofs
more clearly.
I take full responsibility for the mathematical content of this paper.

Over the past few years, personal circumstances have left me
with little time for mathematics.
I apologize to my colleagues and others for the delays
and inconvenience this has caused.
I will address the remaining matters one by one.
I thank them for their patience and understanding.

\section{Preliminaries}\label{sec:preliminaries}
\subsection{Orbit sets and the ECH index}

Let $(Y,\lambda)$ be a closed connected non-degenerate contact
three-manifold. We use coefficients in $\F=\Z/2\Z$. An orbit set is a finite collection
$\alpha=\{(\alpha_i,m_i)\}$ of distinct simple Reeb orbits and positive
integers. We call $\alpha$ an ECH generator if $m_i=1$ whenever $\alpha_i$
is hyperbolic. Set
\[
 [\alpha]=\sum_i m_i[\alpha_i],\qquad
 \A(\alpha)=\sum_i m_i\int_{\alpha_i}\lambda.
\]
For $\Gamma\in H_1(Y;\Z)$, the chain group is
\begin{equation}\label{eq:eccdefinition}
 \ECC(Y,\lambda,\Gamma)
 =\bigoplus_{\substack{\alpha\text{ an ECH generator}\\{[\alpha]=\Gamma}}}
 \F\,\alpha.
\end{equation}
Note that the empty set  is an ECH generator.

For $[\alpha]=[\beta]$ with $\alpha=\{(\alpha_i,m_i)\}$ and $\beta=\{(\beta_j,n_j)\}$, let $H_2(Y,\alpha,\beta)$ be the affine space of
relative classes of integral two-chains with boundary $\sum_i m_i\alpha_i-\sum_j n_j\beta_j$.
It is an affine space over $H_2(Y;\Z)$. Choose a symplectic trivialization
$\tau$ of $\xi$ along these orbits. For $Z\in H_2(Y,\alpha,\beta)$, the
ECH index is
\begin{equation}\label{eq:echindex}
\begin{split}
 I(\alpha,\beta,Z)
 ={}&c_\tau(Z)+Q_\tau(Z)\\
 &+\sum_i\sum_{q=1}^{m_i}\mu_\tau(\alpha_i^q)
 -\sum_j\sum_{q=1}^{n_j}\mu_\tau(\beta_j^q).
\end{split}
\end{equation}
 Here $c_\tau$ is the relative first Chern
number, $Q_\tau$ is the relative intersection number, and $\mu_\tau$ is
the Conley--Zehnder index. The ECH index $I$ is independent of $\tau$ and is additive under concatenation.
The definition and basic properties are due to Hutchings
\cite{H1}.

A $\lambda$-compatible almost complex structure $J$ on $\R\times Y$ is
$\R$-invariant, satisfies $J\partial_s=X_\lambda$ and $J\xi=\xi$, and is
compatible with $d\lambda$ on $\xi$. We consider finite-energy
$J$-holomorphic curves with ends at covers of Reeb orbits. A $J$-holomorphic
current is a finite collection of distinct irreducible somewhere
injective curves with positive integer multiplicities. We denote the
space of currents from $\alpha$ to $\beta$ by $\Mcal^J(\alpha,\beta)$. Let $\pi_{Y}: \mathbb{R}\times Y \to Y$ be the projection. Then,  we can define the ECH index of  $C \in \Mcal^J(\alpha,\beta)$ by $I(\alpha,\beta,[\pi_{Y}(C)])$ where $[\pi_{Y}(C)] \in H_2(Y,\alpha,\beta)$ is naturally defined.

For a somewhere injective curve $u$ with punctured domain $\dot\Sigma$, Fredholm index is 
\begin{equation}\label{eq:fredholm}
 \ind(u)=-\chi(\dot\Sigma)+2c_\tau(u)
 +\sum_{z\in\Gamma^+}\mu_\tau(\gamma_z)
 -\sum_{z\in\Gamma^-}\mu_\tau(\gamma_z).
\end{equation}

If $J$ is generic and $u$ is somewhere injective,
the moduli space is smooth near $u$ and has dimension $\ind(u)$.
See \cite[Proposition 1.4]{HT1}.

\subsection{The differential and the \texorpdfstring{$U$}{U}-map}

Let $\Mcal^J_r(\alpha,\beta)$ denote the currents of ECH index $r$.
For generic $J$, define
\begin{equation}\label{eq:differentialdefinition}
 \partial_J\alpha
 =\sum_{\substack{\beta\text{ an ECH generator}\\{[\beta]=\Gamma}}}
 \#\bigl(\Mcal^J_1(\alpha,\beta)/\R\bigr)\,\beta.
\end{equation}
Here $\#$ denotes the count modulo two, and $\R$ acts by translation.
The count is finite and $\partial_J^2=0$;
see \cite[Lemma 7.19 and Theorem 7.20]{HT1}.
The gluing argument is completed in \cite{HT2}.
We set
\begin{equation}\label{eq:echhomology}
 \ECH_*(Y,\lambda,\Gamma;J)
 =H_*(\ECC(Y,\lambda,\Gamma),\partial_J).
\end{equation}

The relative grading takes values in $\Z/d_\Gamma\Z$. Here $d_\Gamma$ is
the divisibility of $c_1(\xi)+2\PD(\Gamma)$ modulo torsion.
If this class is torsion, we set $d_\Gamma=0$.
In this case the relative grading is integer-valued and, we choose an integer grading
$\operatorname{gr}$ such that
\[
 \operatorname{gr}(\alpha)-\operatorname{gr}(\beta)
 =I(\alpha,\beta,Z)
\]
for ECH generators $\alpha,\beta$ in class $\Gamma$.
This grading is determined up to an additive constant.
For a nonzero homogeneous ECH class $\sigma$,
we also write $\operatorname{gr}(\sigma)$ for its degree.
The differential has degree $-1$. The parity of a generator
is the number of its positive hyperbolic orbits modulo two (see
\cite{H1}). Hence, if
there are no simple positive hyperbolic orbits, then
\begin{equation}\label{eq:zero}
 \partial_J=0.
\end{equation}

For a generic point $z$ not on any periodic orbits, define
\begin{equation}\label{eq:udefinition}
 \langle U_{J,z}\alpha,\beta\rangle
 =\#\{\Ccal\in\Mcal^J_2(\alpha,\beta):(0,z)\in\Ccal\}.
\end{equation}
This is a chain map of degree $-2$.
For two generic points $z,z'$ outside the periodic orbits,
there is a chain homotopy $H$ such that
\[
 U_{J,z}-U_{J,z'}=\partial_JH+H\partial_J.
\]
See \cite{HT3}.
Thus the induced map on ECH is independent of $z$.
We denote it by $U_J$.
If $\partial_J=0$, then the chain map $U_{J,z}$ itself is
independent of $z$.
We write $U$ for the chain map when $J$ and $z$ are fixed.

\paragraph{Curves of low ECH indices.}
Write a holomorphic curve as $u=u_0\cup u_1$. The part $u_0$ consists
of covers of trivial cylinders. The part $u_1$ has no such component.
For generic $J$, the ECH index is nonnegative.
We recall the low-index classification
\cite{H1};
see also \cite{HT1}.
\begin{enumerate}[label=(\roman*)]
\item If $I(u)=0$, then $u_1=\varnothing$.
\item If $I(u)=1$, then $u_1$ is embedded, connected and
$\ind(u_1)=I(u_1)=1$. In addition, $u_0$ and $u_1$ are disjoint.
\item If $I(u)=2$ and the positive and negative orbit sets are ECH
 generators, then $u_1$ is embedded and $\ind(u_1)=I(u_1)=2$. In addition, $u_0$ and $u_1$ are disjoint.
\end{enumerate}
In (iii), $u_1$ need not be connected.
If it is disconnected, it has exactly two components.
Each component has Fredholm index one.

For a $J$-holomorphic curve counted by $U_{J,z}$, the nontrivial part is connected
by \cite[Lemma 2.6(b)]{HT3}.
Thus it has the form
\begin{equation}\label{eq:split}
 \Ccal=\Ccal_0\sqcup C,
\end{equation}
where $\Ccal_0$ consists of trivial cylinders and $C$ is an embedded
irreducible nontrivial curve with
\begin{equation}\label{eq:indtwo}
 I(C)=\ind(C)=2.
\end{equation}
The images of $C$ and $\Ccal_0$ are disjoint, and $(0,z)\in C$.

\paragraph{Partition conditions.}
For a simple negative hyperbolic orbit $\gamma$, the incoming and
outgoing partitions of a positive integer $m$ are given by
\[
 P^{\mathrm{in}}_\gamma(m)=P^{\mathrm{out}}_\gamma(m)=
 \begin{cases}
  (2,\ldots,2),&m\text{ even},\\
  (2,\ldots,2,1),&m\text{ odd}.
 \end{cases}
\]
(See \cite{H1} and
\cite{HT1}).
If a nontrivial somewhere injective curve $u$ satisfies
$I(u)=\ind(u)$, its end multiplicities are given by these partitions (see \cite{H1,HT1}).
For such a curve, each end at a cover of $\gamma$ has
multiplicity one or two. 

\subsection{The isomorphism with monopole Floer cohomology}
\label{subsec:monopole}

We use the monopole Floer cohomology $\widehat{\HM}^{*}$ of
Kronheimer and Mrowka \cite{KM}. It is defined from the perturbed
Seiberg--Witten equations. Let $\mathfrak s_\xi$ be the
$\operatorname{spin}^c$ structure of $\xi$. For $\Gamma\in H_1(Y;\Z)$,
set $\mathfrak s_\Gamma=\mathfrak s_\xi+\PD(\Gamma)$. Then
\begin{equation}\label{eq:spincchern}
 c_1(\mathfrak s_\Gamma)=c_1(\xi)+2\PD(\Gamma).
\end{equation}

\begin{theorem}[Taubes {\cite[Theorem 1]{T}} and {\cite[Theorem 1.1]{TV}}]
\label{thm:taubes}
For every $\Gamma$ and generic $\lambda$-compatible $J$, there is a
canonical isomorphism
\begin{equation}\label{eq:taubesisomorphism}
 \Psi_\Gamma:\ECH_*(Y,\lambda,\Gamma;J)
 \xrightarrow{\simeq}
 \widehat{\HM}^{-*}(Y,\mathfrak s_\Gamma).
\end{equation}

satisfying
\begin{equation}\label{eq:uequivariance}
 \Psi_\Gamma\circ U_J
 =U_{\mathrm{HM}}\circ\Psi_\Gamma.
\end{equation}
\end{theorem}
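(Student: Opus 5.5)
This statement is quoted from Taubes' work, and in the paper it is invoked as a black box. A full proof is far beyond the scope of the present argument. What follows is the route I would follow to reconstruct it, essentially Taubes' strategy. The basic idea is to deform the Seiberg--Witten equations on $Y$ by a large multiple of the contact form, and to show that, as the deformation parameter $r\to\infty$, both the generators and the differentials of the two theories match.

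\textbf{Perturbed equations and their solutions.} For a parameter $r\ge1$, I would perturb the three-dimensional Seiberg--Witten equations for $\mathfrak s_\Gamma$ by the term $ir\,d\lambda$ (plus a small generic perturbation). I would write the spinor as $\psi=(\alpha,\beta)$ with respect to the splitting $\underline{\mathbb C}\oplus\xi$ of the spinor bundle, using the canonical spinor attached to $\lambda$.

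The first key step is an a priori analysis of solutions with energy $\int(1-|\alpha|^2)\lesssim L$. The goal is to show that, for $r$ large, the zero set $\alpha^{-1}(0)$ concentrates near a union of Reeb orbits. In the limit it defines an orbit set $\Theta$ with $[\Theta]=\Gamma$ and $\A(\Theta)\le L$, with the energy converging to $2\pi\A(\Theta)$.

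Conversely, I would glue vortex solutions in the normal disks along each orbit to build solutions from ECH generators. The restriction $m_i=1$ at hyperbolic orbits appears here: the linearized operators are invertible exactly when $\Theta$ is an ECH generator. This identifies the irreducible critical points of action below $L$ with ECH generators of action below $L$, for $r\ge r(L)$. Reducible solutions are pushed to very negative degree, so they do not interfere in this range.

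\textbf{Matching the differentials.} The second step does the same for four-dimensional instantons on $\R\times Y$. For large $r$, I would show that the zero loci of the $\alpha$-components of index-one instantons converge to ECH index one $J$-holomorphic currents. This is a compactness statement using the energy bound and the curvature estimate. The converse gluing constructs an instanton near each such current, using the low-index classification (item (ii) of the list above) and the partition conditions. Together these would give a chain isomorphism between the filtered complexes $\ECC^L$ and the filtered Seiberg--Witten complex. Letting $L\to\infty$ and using invariance of $\widehat{\HM}$ under the change of $r$, one obtains $\Psi_\Gamma$.

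The $U$-equivariance \eqref{eq:uequivariance} would come from the same correspondence with a point constraint. On the monopole side, $U_{\mathrm{HM}}$ is represented by cutting down index-two instantons by a holonomy or zero-locus condition at a point $(0,z)$. Under the degeneration, this condition corresponds to the current passing through $(0,z)$, which is the definition \eqref{eq:udefinition}.

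\textbf{Main obstacle.} I expect the main difficulty to be the instanton-to-curve correspondence, that is, proving that every instanton for large $r$ is close to a holomorphic current and that the gluing is a bijection. This needs delicate uniform estimates on $|\beta|$ and on the curvature along the concentration set. The cleanest way I know around it is Taubes' alternative route: first identify ECH with $\widehat{\HM}$ via the Seiberg--Witten Floer comparison of filtered complexes, and then pass to the direct limit in $L$. For the purposes of this paper, I would simply cite \cite{T,TV} for both the isomorphism and the $U$-compatibility.
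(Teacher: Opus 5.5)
Your proposal matches the paper: the paper does not prove this statement but imports it, together with the $U$-equivariance, from Taubes \cite{T,TV}, which is exactly where you end up. Your sketch works only as orientation and is not what establishes the result; the citation is. For instance, the ``alternative route'' you mention at the end is the same filtered-complex comparison plus direct limit that you had already described, so it does not get around the instanton-to-curve correspondence.
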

Here $U_{\mathrm{HM}}$ has  degree $2$. ECH and $U_J$ do not depend on $J$, hence we just write them as $\ECH_*(Y,\lambda,\Gamma)$ and $U$.

\subsection{The $U$ tower on a rational homology sphere}
\label{subsec:rationaltower}

Suppose that $Y$ is a rational homology sphere, equivalently $b_1(Y)=0$.
Then $H_1(Y;\Z)$ is finite and $H_2(Y;\Z)=0$. All $c_1(\mathfrak s_\Gamma)$
are torsion, so we choose an integer grading in each class $\Gamma$.
Define the graded module
\begin{equation}\label{eq:tplus}
 \mathcal T^+
 =\F[U,U^{-1}]/U\F[U],
 \qquad \deg U=-2.
\end{equation}
Let $\mathcal T^+_d$ have its bottom element in degree $d$. Its basis
$e_k=U^{-k}$ satisfies
\begin{equation}\label{eq:abstracttower}
 \begin{gathered}
 \deg e_k=d+2k,\qquad k\ge0,\\
 U e_0=0,\qquad U e_{k+1}=e_k.
 \end{gathered}
\end{equation}

\begin{proposition}\label{prop:rhstower}
For each $\Gamma\in H_1(Y;\Z)$, there are an integer $d$ and a
finite-dimensional graded $\F[U]$-module $R_\Gamma$ such that
\begin{equation}\label{eq:echdecomposition}
 \ECH_*(Y,\lambda,\Gamma)
 \cong\mathcal T^+_d\oplus R_\Gamma
\end{equation}
as graded $\F[U]$-modules.
\end{proposition}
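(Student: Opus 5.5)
The plan is to transfer the problem to monopole Floer cohomology via Theorem~\ref{thm:taubes} and then use the Kronheimer--Mrowka structure theory for torsion $\operatorname{spin}^c$ structures. Since $b_1(Y)=0$, every $\mathfrak s_\Gamma$ has torsion first Chern class by \eqref{eq:spincchern}. For such $\operatorname{spin}^c$ structures the three flavors $\widecheck{\HM}$, $\widehat{\HM}$, $\overline{\HM}$ are rationally graded and fit into the exact triangle
\[
\cdots\to \overline{\HM}\xrightarrow{i}\widecheck{\HM}\xrightarrow{j}\widehat{\HM}\xrightarrow{p}\overline{\HM}\to\cdots,
\]
which is $U$-equivariant. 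With the cohomological conventions used here, the relevant flavor is the one denoted $\widehat{\HM}^{*}$.

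First I will recall from \cite{KM} that for $b_1=0$ the reduced group $\overline{\HM}(Y,\mathfrak s)$ is isomorphic, as an $\F[U]$-module, to $\F[U,U^{-1}]$. This holds because it is computed from the reducible locus, which is a single point modulo gauge. The map $p$ (or its cohomological analogue) is an isomorphism in sufficiently high degrees and zero in sufficiently low degrees. This is the statement that the reduced-flavor homology vanishes in degrees far from a bounded window, since $\widehat{\HM}$ and $\widecheck{\HM}$ are each bounded on one side. Consequently $\widehat{\HM}^{-*}(Y,\mathfrak s_\Gamma)$ contains an infinite $U$-tower, namely the image of the appropriate map from $\overline{\HM}$. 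This tower is isomorphic to $\F[U,U^{-1}]/U\F[U]$ with a bottom degree $d$, and $U_{\mathrm{HM}}$ acts by shifting down. The cokernel, or complementary part, is $HM_{\mathrm{red}}$, which is finite-dimensional because the Floer groups are finitely generated in each degree and nonzero in only finitely many degrees outside the tower.

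Second, I will show that the tower splits off as an $\F[U]$-module summand. The submodule $T\cong\mathcal T^+_d$ is divisible, since every element is in the image of $U$ within $T$. Over the PID $\F[U]$, a divisible module is injective, so $T$ is a direct summand: $\widehat{\HM}\cong T\oplus R$, with $R\cong \widehat{\HM}/T$ finite-dimensional. To keep the splitting graded, I will use graded injectivity, or argue directly. Choose homogeneous lifts of a basis of the finite-dimensional quotient, then correct each lift by elements of $T$ so that its $U$-images land back in the span. This is possible because $U$ acts surjectively on $T$ in each degree and the quotient is finite-dimensional.

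Finally, I will pull everything back through the $U$-equivariant isomorphism $\Psi_\Gamma$ of \eqref{eq:taubesisomorphism} and \eqref{eq:uequivariance}. The degree conventions need reconciling: $U$ has degree $-2$ on ECH and degree $2$ on $\widehat{\HM}^{*}$, and $*\mapsto -*$ swaps these. After shifting the ECH integer grading by a constant, the rational grading shift becomes an integer $d$. This gives \eqref{eq:echdecomposition}. I expect the main obstacle to be the first step: correctly citing and orienting the Kronheimer--Mrowka statement that, for torsion $\operatorname{spin}^c$ structures on a rational homology sphere, the relevant flavor equals a single $\mathcal T^+$ tower plus a finite-dimensional reduced part in the cohomological convention matching Taubes' isomorphism. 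The rest is algebra.
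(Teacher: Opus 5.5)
Your proposal is correct and follows the same route as the paper. The paper's proof is only a citation: Taubes's $U$-equivariant isomorphism (Theorem~\ref{thm:taubes}) combined with the Kronheimer--Mrowka structure theorem for $\widehat{\HM}$ in the torsion $\operatorname{spin}^c$, $b_1=0$ case. Your three steps (the tower as the image of $\overline{\HM}$ in the exact triangle, finiteness of the reduced part, and the graded splitting via divisibility of $\mathcal T^+_d$) just spell out what those citations provide, and your grading orientation is right once you pass to cohomology and apply $*\mapsto-*$.
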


\begin{proof}
This follows from Theorem~\ref{thm:taubes} and the structure of
monopole Floer cohomology for rational homology spheres;
see \cite[Propositions 35.3.1 and 22.2.3]{KM}.
\end{proof}

\begin{corollary}\label{cor:generatortower}
Suppose that $Y$ is a rational homology sphere and that $\lambda$ has no
simple positive hyperbolic orbit. For fixed generic $J$ and each $\Gamma$, there are an integer
$a_\Gamma$ and ECH generators $\{\alpha_k^\Gamma\}_{k\ge0}$ such that
\begin{equation}\label{eq:generatordegrees}
 \operatorname{gr}(\alpha_k^\Gamma)=a_\Gamma+2k
\end{equation}
and  for every generic point $z$,
\begin{equation}\label{eq:tower}
 U_{J,z}\alpha_{k+1}^\Gamma=\alpha_k^\Gamma,
 \qquad I(\alpha_{k+1}^\Gamma,\alpha_k^\Gamma)=2.
\end{equation}
Only finitely many ECH generators in class $\Gamma$ are not in  this
sequence.
\end{corollary}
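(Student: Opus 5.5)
The plan is to read the corollary off from Proposition~\ref{prop:rhstower}, using the vanishing of the differential. Since $\lambda$ has no simple positive hyperbolic orbit, every ECH generator has even parity, so $\partial_J=0$ by \eqref{eq:zero}. Consequently $\ECH_*(Y,\lambda,\Gamma)=\ECC(Y,\lambda,\Gamma)$, with the ECH generators in class $\Gamma$ forming a homogeneous basis. Moreover $U$ on homology equals the chain map $U_{J,z}$, and this chain map is independent of $z$ (it is also independent of $J$ up to the identification, but we fix $J$). Fix a graded $\F[U]$-isomorphism $\Phi:\mathcal T^+_d\oplus R_\Gamma\to\ECC(Y,\lambda,\Gamma)$ as in \eqref{eq:echdecomposition}, and set $\sigma_k=\Phi(e_k)$. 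Then $\operatorname{gr}(\sigma_k)=d+2k$, $U\sigma_{k+1}=\sigma_k$, and $U\sigma_0=0$.

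The issue is that $\sigma_k$ is a sum of generators, not a single generator. To fix this, I use a finiteness argument in each degree. First, the set of generators in a fixed degree is finite. This should follow from the fact that $\ECC$ in a given grading is finite-dimensional, since it is isomorphic to $\ECH$ in that degree. That homology is finite-dimensional because $\mathcal T^+_d\oplus R_\Gamma$ has finite rank in each degree, and $\partial=0$ means chains equal homology.

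Next, since $R_\Gamma$ is finite-dimensional, it vanishes in degrees above some $N$. So for $d+2k>N$, the degree-$(d+2k)$ part of $\ECC$ is one-dimensional, spanned by $\sigma_k$. A one-dimensional space with a basis of generators means $\sigma_k$ is itself a single generator; call it $\alpha$. Choose $k_0$ with $d+2k_0>N$. Define $\alpha^\Gamma_k=\sigma_{k+k_0}$ and $a_\Gamma=d+2k_0$. Then each $\alpha^\Gamma_k$ is a generator, and $U_{J,z}\alpha^\Gamma_{k+1}=\alpha^\Gamma_k$ for every generic $z$. The equality $I(\alpha^\Gamma_{k+1},\alpha^\Gamma_k)=2$ is just the grading difference, because $H_2(Y)=0$ makes $Z$ unique and $I$ equal to the relative grading.

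Finally, I check the finiteness claim. Generators not in the sequence lie in degrees $\le N$ or in degrees of the wrong parity, and in degrees above $N$ only the $\alpha^\Gamma_k$ occur, since those degrees are one-dimensional. In degrees $\le N$ there are only finitely many nonzero degrees. This holds because the total degree-wise dimension is $\dim R_\Gamma$ plus the tower part in degrees $d,\dots,N$, and below $d$ only $R_\Gamma$ contributes, and $R_\Gamma$ is finite. Each of these degrees contains finitely many generators.

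The main point needing care is the step ``one-dimensional spanned by generators implies a generator''. This relies on $\partial_J=0$, so that chains are homology and generators give a basis. The grading and independence-of-$z$ statements are then routine.
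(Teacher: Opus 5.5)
Your proposal is correct and takes essentially the same route as the paper. You use $\partial_J=0$ to identify chains with homology. In the high tower-parity degrees beyond the support of $R_\Gamma$, each group is one-dimensional and hence is spanned by a single ECH generator. Since $U$ is an isomorphism in those degrees, its chain-level coefficient is $1$ for every generic $z$, and the remaining degrees carry only finitely many generators. Your version spells out two points the paper leaves implicit: finiteness of generators in each degree, and the identification of $I(\alpha^\Gamma_{k+1},\alpha^\Gamma_k)$ with the grading difference via $H_2(Y)=0$.
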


\begin{proof}
By \eqref{eq:zero}, $\partial_J=0$, so the chain group equals its homology.
Choose $a_\Gamma$ in the tower parity, above the bottom of the tower and
the degrees of $R_\Gamma$. Each group in degree $a_\Gamma+2k$ is
one-dimensional. Hence it contains exactly one ECH generator
$\alpha_k^\Gamma$. In these degrees, $U$ is an isomorphism.
Its chain-level coefficient is therefore $1$ over $\F$.
The remaining degrees have finite total dimension, so only finitely many ECH generators in class $\Gamma$ are not in  this
sequence.
\end{proof}

\subsection{The \texorpdfstring{$J_0$}{J0} index}

We use the index $J_0$ introduced by Hutchings
\cite[Definition 6.2]{H2}.
Let $\alpha=\{(\alpha_i,m_i)\}$ and $\beta=\{(\beta_j,n_j)\}$
be orbit sets with $[\alpha]=[\beta]$.
For $Z\in H_2(Y,\alpha,\beta)$, define
\begin{equation}\label{eq:jzero}
\begin{split}
 J_0(\alpha,\beta,Z)
 ={}&-c_\tau(Z)+Q_\tau(Z)\\
 &+\sum_i\sum_{q=1}^{m_i-1}\mu_\tau(\alpha_i^q)
 -\sum_j\sum_{q=1}^{n_j-1}\mu_\tau(\beta_j^q).
\end{split}
\end{equation}
The index does not depend on $\tau$.
It is additive under concatenation
\cite[Proposition 6.5]{H2}.
For a current $\Ccal\in\Mcal^J(\alpha,\beta)$, we write
$J_0(\Ccal)=J_0(\alpha,\beta,[\pi_Y(\Ccal)])$.

We use $J_0$ to estimate the genus and the number of ends of
a curve counted by the $U$-map.
Suppose that all multiplicities in $\alpha$ and $\beta$ are one.
Let $\Ccal=\Ccal_0\sqcup C$ be a current from $\alpha$ to $\beta$
counted by $U_{J,z}$.
Let $g(C)$ and $e(C)$ be the genus and the total number of ends of $C$.
By \cite{CHP}, we have
\begin{equation}\label{eq:topology}
 J_0(\Ccal)=-\chi(C)=2g(C)-2+e(C).
\end{equation}
Since $C$ has at least one positive end, we obtain
\begin{equation}\label{eq:jlower}
 J_0(\Ccal)\ge-1.
\end{equation}

\begin{proposition}[{\cite{CHP,CP}}]
\label{prop:ij}
Suppose that $c_1(\xi)$ is torsion. For fixed $\lambda$, there is a
constant $C_*>0$ such that
\begin{equation}\label{eq:ij}
 |I(\alpha,\beta,Z)-J_0(\alpha,\beta,Z)|
 \le C_*\A(\alpha)
\end{equation}
for all ECH generators with $[\alpha]=[\beta]$ and
$\A(\beta)\le\A(\alpha)$, and all $Z\in H_2(Y,\alpha,\beta)$.
\end{proposition}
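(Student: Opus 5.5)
The plan is to compute $I-J_0$ directly from \eqref{eq:echindex} and \eqref{eq:jzero}, and then bound each remaining term by a multiple of the action, using a trivialization of $\xi$ adapted to the torsion of $c_1(\xi)$. Subtracting the two formulas, the relative intersection terms $Q_\tau(Z)$ cancel. In each orbit sum only the top iterate survives, so
\[
 I(\alpha,\beta,Z)-J_0(\alpha,\beta,Z)
 =2c_\tau(Z)+\sum_i\mu_\tau(\alpha_i^{m_i})-\sum_j\mu_\tau(\beta_j^{n_j}).
\]
The first observation is that the right side does not depend on $Z$. Changing $Z$ by $A\in H_2(Y;\Z)$ changes $c_\tau$ by $\langle c_1(\xi),A\rangle$, which is $0$ because $c_1(\xi)$ is torsion. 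The whole expression is also independent of $\tau$, since $I$ and $J_0$ are.

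\textbf{Choice of trivialization.} Since $c_1(\xi)$ is torsion, there is $N\ge1$ with $N c_1(\xi)=0$, so the complex line bundle $\xi^{\otimes N}$ is trivial. I fix a global nonvanishing section $\sigma$ of $\xi^{\otimes N}$. Along each simple orbit, $\sigma$ determines a trivialization of $\xi$ up to an $N$-th root ambiguity. To avoid the ambiguity, I will work with real-valued quantities:
\begin{enumerate}[label=(\alph*)]
\item the rational relative Chern number $c_\sigma(Z)=\frac1N c_\tau(Z;\xi^{\otimes N},\sigma)$, which vanishes because $\sigma$ is global;
\item the rotation number $\theta_\sigma(\gamma)\in\R$ of the linearized Reeb flow along $\gamma$, measured against the frame induced by $\sigma$.
\end{enumerate}
For an honest trivialization $\tau$ along the orbits, $c_\tau(Z)$ differs from $c_\sigma(Z)=0$ by the winding of $\tau$ relative to $\sigma^{1/N}$. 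The term $\mu_\tau(\gamma^m)$ shifts by exactly twice the opposite amount, so in the combination above the corrections cancel. Concretely, I expect
\[
 I-J_0=\sum_i\widetilde\mu_\sigma(\alpha_i^{m_i})-\sum_j\widetilde\mu_\sigma(\beta_j^{n_j}),
\]
where $\widetilde\mu_\sigma(\gamma^m)$ is the real-valued Conley--Zehnder-type quantity, satisfying $|\widetilde\mu_\sigma(\gamma^m)-2m\theta_\sigma(\gamma)|\le1$.

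\textbf{Bounding the rotation numbers.} The linearized flow on $\xi$, written in the frame from $\sigma$, is generated by a continuous family of $2\times2$ infinitesimally symplectic matrices over the compact manifold $Y$. Its angular speed is therefore bounded by some constant $c$, so $|\theta_\sigma(\gamma)|\le c\,\A(\gamma)$ for every simple orbit. Hence
\[
 |\widetilde\mu_\sigma(\gamma^m)|\le 2c\,m\A(\gamma)+1.
\]
Summing over the orbits of $\alpha$ gives at most $2c\A(\alpha)+\#\{i\}$. Because every simple orbit has action at least $T_{\min}>0$, the number of orbits in $\alpha$ is at most $\A(\alpha)/T_{\min}$. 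The same bounds hold for $\beta$, and $\A(\beta)\le\A(\alpha)$, so we may take $C_*=2(2c+1/T_{\min})$.

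\textbf{Main obstacle.} The delicate step is the bookkeeping with the fractional trivialization coming from $\xi^{\otimes N}$. One must check that replacing $c_\tau$ and $\mu_\tau$ by their $\sigma$-normalized versions is legitimate: the discrepancy in $2c_\tau(Z)$ must cancel exactly against the discrepancy in the Conley--Zehnder terms. This is the usual transformation rule under a change of trivialization; applied with multiplicity $N$ and divided by $N$, it should give exactly this cancellation. If this becomes awkward, I would instead use the fact (as in \cite{CHP,CP}) that $I$ and $J_0$ are independent of $\tau$. Then $2c_\tau(Z)$ can be bounded directly, by choosing $\tau$ to be the frame closest to $\sigma^{1/N}$ on each orbit: this makes $|c_\tau(Z)|$ at most the number of ends, which is itself at most $2\A(\alpha)/T_{\min}$.
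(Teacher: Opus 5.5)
Your argument is correct. The identity $I-J_0=2c_\tau(Z)+\sum_i\mu_\tau(\alpha_i^{m_i})-\sum_j\mu_\tau(\beta_j^{n_j})$ and the normalization against a global nonvanishing section of $\xi^{\otimes N}$ (the Chern term cancels the Conley--Zehnder shifts, leaving only $\sigma$-rotation numbers) give the estimate. So does the bound $|\theta_\sigma(\gamma)|\le c\,\A(\gamma)$, which comes from bounded angular speed on the compact circle bundle of $\xi$. Together these yield a constant $C_*$ depending only on $\lambda$, uniform over infinitely many orbits, which is exactly what is needed under \NH.

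The paper gives no proof of its own and simply cites \cite{CHP,CP}, so your write-up is a self-contained version of the standard argument. One small clarification: in your fallback, the bound on $|c_\tau(Z)|$ is half the total multiplicity $\sum_i m_i+\sum_j n_j$, not the number of distinct orbits. Your bound $2\A(\alpha)/T_{\min}$ still holds.
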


\subsection{ECH spectral invariants and the Weyl law}

Let $\ECC^L$ be generated by ECH generators of action less than $L$.
The differential and $U$ decrease action, since
\[
 \A(\alpha)-\A(\beta)=\int_{\Ccal}d\lambda>0
\]
for any current counted by these maps. Thus $\ECC^L$ is a subcomplex.
Write $\ECH^L$ for its homology and $\iota_L:\ECH^L\to\ECH$ for the
induced map by the inclusion. For a nonzero homogeneous class $\sigma$, define
\begin{equation}\label{eq:spectraldefinition}
 c_\sigma(Y,\lambda)
 =\inf\{L>0:\sigma\in\operatorname{Im}(\iota_L)\}.
\end{equation}
This is \cite[Definition 4.1]{H3}.
Set $\Vol(Y,\lambda)=\int_Y\lambda\wedge d\lambda$.
The Weyl law \cite[Theorem 1.3]{CHR} states that
\begin{equation}\label{eq:volume}
 \lim_{k\to\infty}
 \frac{c_{\sigma_k}(Y,\lambda)^2}{\operatorname{gr}(\sigma_k)}
 =\Vol(Y,\lambda)
\end{equation}
if $c_1(\xi)+2\PD(\Gamma)$ is torsion and the nonzero homogeneous classes
$\sigma_k\in\ECH(Y,\lambda,\Gamma)$ satisfy
$\operatorname{gr}(\sigma_k)\to\infty$. 

\section{Action estimates in the negative hyperbolic case}\label{sec:actions}

From now on, until Section~\ref{sec:mainproof}, we suppose that
\begin{equation}\label{eq:nh}
 \NH:\quad b_1(Y)=0\quad\text{and all simple Reeb orbits are negative hyperbolic}.
\end{equation}
The form $\lambda$ is always assumed to be non-degenerate.

The manifold $Y$ is an oriented rational homology sphere. Thus
$H_1(Y;\Z)$ is finite and $H_2(Y;\Z)=0$. In particular, $c_1(\xi)$ is
torsion. For homologous orbit sets, the relative class is unique. Hence
we just write $I(\alpha,\beta)$ and $J_0(\alpha,\beta)$ without the relative
class. Every ECH generator is now a finite set of simple orbits. We
denote its cardinality by $|\alpha|$.

 We may multiply $\lambda$ by a positive constant and assume that
\begin{equation}\label{eq:minperiod}
 T_\gamma:=\A(\gamma)\ge1
 \quad\text{for every simple periodic orbit }\gamma.
\end{equation} 

\subsection{Counting ECH generators}

For $L>0$ and $\Gamma\in H_1(Y;\Z)$, use the notation
\[
 \Lambda(L,\Gamma)=
 \{\alpha:\alpha\text{ is an ECH generator},\ [\alpha]=\Gamma,\ \A(\alpha)<L\}.
\]
Let
\[
 Q(L)=\sum_{\Gamma\in H_1(Y;\Z)}|\Lambda(L,\Gamma)|,
 \qquad
 N(L)=\#\{\gamma\text{ simple}:T_\gamma\le L\}.
\]

\begin{lemma}\label{lem:count}
Under \NH, for every $\Gamma\in H_1(Y;\Z)$,
\begin{equation}\label{eq:countgamma}
 \lim_{L \to \infty}\frac{L^2}{|\Lambda(L,\Gamma)|}=2\Vol(Y,\lambda).
\end{equation}
Consequently,
\begin{equation}\label{eq:countall}
\lim_{L \to \infty}\frac{L^2}{Q(L)}=\frac{2\Vol(Y,\lambda)}{|H_1(Y;\Z)|}.
\end{equation}
\end{lemma}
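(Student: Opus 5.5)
Under \NH, the differential vanishes by \eqref{eq:zero}, so every ECH generator in class $\Gamma$ is a cycle representing a distinct nonzero homology class. By Corollary~\ref{cor:generatortower}, all but finitely many generators in class $\Gamma$ lie in the tower $\{\alpha_k^\Gamma\}_{k\ge0}$, with $\operatorname{gr}(\alpha_k^\Gamma)=a_\Gamma+2k$. The plan is to show that the actions $\A(\alpha_k^\Gamma)$ are eventually increasing in $k$ and are governed by the Weyl law. Then $|\Lambda(L,\Gamma)|$ is, up to a bounded error, the number of $k$ with $\A(\alpha_k^\Gamma)<L$.

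\textbf{Identifying spectral invariants with actions.} Let $\sigma_k$ be the class of $\alpha_k^\Gamma$. Since $\partial_J=0$, the subcomplex $\ECC^L$ equals its homology. A class lies in $\operatorname{Im}(\iota_L)$ exactly when its unique representing chain, which is the chain itself, is a sum of generators of action $<L$. Hence $\sigma_k\in\operatorname{Im}(\iota_L)$ if and only if $\A(\alpha_k^\Gamma)<L$, and so $c_{\sigma_k}(Y,\lambda)=\A(\alpha_k^\Gamma)$. Moreover $U\alpha_{k+1}^\Gamma=\alpha_k^\Gamma$ is counted by a current with positive $d\lambda$-area, so $\A(\alpha_{k+1}^\Gamma)>\A(\alpha_k^\Gamma)$. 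The tower actions $L_k:=\A(\alpha_k^\Gamma)$ are therefore strictly increasing.

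\textbf{Applying the Weyl law.} Since $H_2(Y)=0$, the class $c_1(\xi)+2\PD(\Gamma)$ is torsion, and $\operatorname{gr}(\sigma_k)\to\infty$. So \eqref{eq:volume} gives $L_k^2/(a_\Gamma+2k)\to\Vol(Y,\lambda)$, that is, $L_k^2/k\to2\Vol(Y,\lambda)$. Let $n(L)=\#\{k:L_k<L\}$. Monotonicity gives $L_{n(L)-1}<L\le L_{n(L)}$. Both $L_{n-1}^2/n$ and $L_n^2/n$ tend to $2\Vol(Y,\lambda)$, so $L^2/n(L)\to2\Vol(Y,\lambda)$. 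This step needs $n(L)\to\infty$, which holds because each $L_k$ is finite. Finally, $|\Lambda(L,\Gamma)|$ differs from $n(L)$ by at most the fixed finite number of generators outside the tower, which proves \eqref{eq:countgamma}.

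\textbf{The consequence.} The group $H_1(Y;\Z)$ is finite, so summing \eqref{eq:countgamma} over $\Gamma$ gives $Q(L)/L^2\to|H_1(Y;\Z)|/(2\Vol(Y,\lambda))$, which is \eqref{eq:countall}.

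\textbf{Main obstacle.} The delicate step is the identification $c_{\sigma_k}=\A(\alpha_k^\Gamma)$. The definition \eqref{eq:spectraldefinition} uses the strict filtration $\A<L$ together with an infimum, and one must check that no other chain represents $\sigma_k$. Both points follow from $\partial_J=0$, since then homology classes correspond bijectively to chains. One should also note that the spectral invariants and the tower may depend on $J$, but the argument above uses only a fixed generic $J$, and the count $|\Lambda(L,\Gamma)|$ does not involve $J$ at all.
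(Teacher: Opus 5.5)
Your proposal is correct and follows essentially the same route as the paper. The vanishing differential identifies $c_{\sigma_k}$ with $\A(\alpha_k^\Gamma)$, positive $d\lambda$-area gives strictly increasing tower actions, and the Weyl law gives $(\A(\alpha_k^\Gamma))^2/(2k)\to\Vol(Y,\lambda)$. The finitely many generators outside the tower then change the count only by $O(1)$. Your sandwich $L_{n(L)-1}<L\le L_{n(L)}$ spells out the step from tower asymptotics to the counting function, which the paper leaves implicit. That step also uses that $n(L)$ is finite, which follows from $L_k\to\infty$, itself a consequence of the same Weyl law.
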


\begin{proof}[\bf 
Proof of Lemma~\ref{lem:count}]
Fix $\Gamma$. Apply Corollary~\ref{cor:generatortower} and write the
resulting ECH generators as $\alpha_k^\Gamma$. Set
$a_k^\Gamma=\A(\alpha_k^\Gamma)$. Equation \eqref{eq:tower} gives a
$U$-current from $\alpha_{k+1}^\Gamma$ to $\alpha_k^\Gamma$. Its
nontrivial component has positive $d\lambda$-area. Thus
$a_{k+1}^\Gamma>a_k^\Gamma$.

Let $\sigma_k^\Gamma=[\alpha_k^\Gamma]$ denote the corresponding ECH
class. Since $\partial_J=0$, an ECH generator is the only cycle which
represents its ECH class. Hence \eqref{eq:spectraldefinition} gives
\[
 c_{\sigma_k^\Gamma}(Y,\lambda)=a_k^\Gamma.
\]
The degree is $a_\Gamma+2k$ by \eqref{eq:generatordegrees}. The Weyl law
therefore gives
\begin{equation}\label{eq:towerasymp}
 \lim_{k\to\infty}\frac{(a_k^\Gamma)^2}{2k}=\Vol(Y,\lambda).
\end{equation}
The fixed grading shift $a_\Gamma$ does not affect this limit.

There are only finitely many ECH generators outside this sequence.
Consequently,
\[
 |\Lambda(L,\Gamma)|=\#\{k\ge0:a_k^\Gamma<L\}+O(1).
\]
This gives \eqref{eq:countgamma}.  We sum over $\Gamma\in H_1(Y;\Z)$ and obtain
\eqref{eq:countall}.
\end{proof}

\subsection{A uniform bound in each interval of actions}

\begin{lemma}\label{lem:dyadic}
There is an integer $B\ge1$ such that
\begin{equation}\label{eq:dyadic}
 \#\{\gamma\text{ simple}:x\le T_\gamma\le2x\}\le B
 \qquad\text{for every }x\ge\tfrac12.
\end{equation}
In particular,
\begin{equation}\label{eq:logarithmic}
 N(L)=O(\log L).
\end{equation}
\end{lemma}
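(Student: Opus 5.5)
We argue by contradiction, using the quadratic growth of $Q(L)$ from Lemma~\ref{lem:count}. The idea sketched in the introduction is to add one orbit to an ECH generator. Under \NH, every ECH generator is a finite set of distinct simple orbits, and every finite set of simple orbits is an ECH generator. Hence, if $S$ is a set of simple orbits with $x\le T_\gamma\le 2x$ and $\alpha$ is an ECH generator with $\A(\alpha)<L$ and $\alpha\cap S=\varnothing$, then $\alpha\cup\{\gamma\}$ is an ECH generator for every $\gamma\in S$. Its action lies in $[\A(\alpha)+x,\A(\alpha)+2x]$.

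We take $L$ comparable to $x$ and compare $Q(L+2x)$ with $Q(L)$ via an injectivity count. Consider the map $(\alpha,\gamma)\mapsto\alpha\cup\{\gamma\}$ on pairs with $\gamma\in S$, $\gamma\notin\alpha$ and $\A(\alpha)<L$. A given generator $\beta$ has at most $|\beta\cap S|$ preimages. Since every simple orbit has action at least $1$ by \eqref{eq:minperiod}, any $\beta$ with $\A(\beta)<L+2x$ contains at most $(L+2x)/x$ elements of $S$. If $L=cx$, this number is bounded by $c+2$. The generators $\alpha$ of action $<L$ that meet $S$ must be excluded. Instead of estimating them directly, we restrict to generators of action $<x$, which cannot contain any element of $S$ because each such element has action $\ge x$. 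This gives
\[
 |S|\cdot Q(x)\le 3\,Q(3x),
\]
since every $\beta=\alpha\cup\{\gamma\}$ has action $<3x$ and contains at most $3$ elements of $S$. By \eqref{eq:countall}, $Q(x)\sim x^2/c_0$ and $Q(3x)\sim 9x^2/c_0$, where $c_0=2\Vol(Y,\lambda)/|H_1(Y;\Z)|$. Therefore $|S|\le 27+o(1)$ for large $x$. For $x$ in a bounded range $[\tfrac12,x_0]$, the count is bounded because there are only finitely many simple orbits of action $\le 2x_0$; this uses non-degeneracy. Together these give a uniform $B$ for all $x\ge\tfrac12$.

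The logarithmic bound \eqref{eq:logarithmic} then follows by covering $[1,L]$ with the dyadic intervals $[2^j,2^{j+1}]$, $0\le j\le\lceil\log_2L\rceil$. This gives $N(L)\le B(\log_2 L+2)$.

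The step that needs the most care is the count of preimages. When $Q$ sums over all classes $\Gamma$, the map $(\alpha,\gamma)\mapsto\alpha\cup\{\gamma\}$ changes the homology class. This does no harm, because $Q$ already sums over all of $H_1(Y;\Z)$. The asymptotics \eqref{eq:countall} must also be used with $Q(x)>0$ and with uniform control. This holds once $x$ is larger than some $x_0$ depending only on $\lambda$, because both limits exist. Beyond that, the argument is elementary.
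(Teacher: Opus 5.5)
Your proof is correct and follows the paper's argument: you add an orbit with action in $[x,2x]$ to a generator of action $<x$, land in generators of action $<3x$, and compare $Q(x)$ with $Q(3x)$ via \eqref{eq:countall}. The only difference is that the paper observes the map $(\alpha,\gamma)\mapsto\alpha\cup\{\gamma\}$ is injective, because $\gamma$ is the unique orbit of action $\ge x$ in the image; this gives $|S|\,Q(x)\le Q(3x)$ and the limit $9$ instead of your factor $3$ and limit $27$, a difference that is harmless.
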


\begin{proof}[\bf Proof of Lemma~\ref{lem:dyadic}]
Denote the cardinality  \eqref{eq:dyadic} by $r(x)$.
If $\A(\alpha)<x$ and $x\le T_\gamma\le2x$, then
$\gamma\notin\alpha$ and $\alpha\cup\{\gamma\}$ is an ECH generator. In addition, the map
\[
 (\alpha,\gamma)\longmapsto\alpha\cup\{\gamma\}
\]
is injective and the action of the image is less than $3x$.
Hence
\[
 r(x)Q(x)\le Q(3x).
\]
By \eqref{eq:countall}, $Q(3x)/Q(x)\to9$. Thus $r(x)$ is uniformly
bounded for large $x$.
For bounded $x$, we use the finiteness of simple orbits with bounded action.
We increase $B$ if necessary. Then \eqref{eq:dyadic} holds for all $x\ge1/2$.

In particular, this implies that $N(2^n) \leq nB$ and hence $N(L)\leq C \log L$ for constant $C=C(B)$.
As a result, \eqref{eq:logarithmic} follows.
\end{proof}

\subsection{Ordering orbits with close actions}

From now on, fix
\begin{equation}\label{eq:epsilon}
 0<\eps<\frac{1}{4(B+1)}.
\end{equation}
In particular, $\eps<1/4$. 

Consider a graph whose vertices are the simple periodic orbits. We join
two distinct vertices by an edge if their actions differ by less than
$\eps$.

\begin{lemma}\label{lem:weights}
Every connected component of this graph has at most $B$ vertices.
Consequently, there is a function $h$ on the set of simple periodic orbits
such that
\begin{equation}\label{eq:weightrange}
 1\le h(\gamma)\le B
\end{equation}
and
\begin{equation}\label{eq:weightorder}
 0<T_\gamma-T_\delta<\eps
 \quad\Longrightarrow\quad h(\gamma)-h(\delta)\ge1.
\end{equation}
\end{lemma}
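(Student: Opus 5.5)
The plan has two steps: bound the size of each connected component using Lemma~\ref{lem:dyadic}, then define $h$ as the rank of an orbit inside its component, ordered by action.

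\textbf{Step 1: components are small.} Let $K$ be a connected component. Order its vertices by action, $T_{\gamma_1}\le T_{\gamma_2}\le\cdots$. Connectedness forces consecutive actions in this order to differ by less than $\eps$. Indeed, if some gap $T_{\gamma_{i+1}}-T_{\gamma_i}\ge\eps$ occurred, then no edge could join $\{\gamma_1,\dots,\gamma_i\}$ to the remaining vertices, since every such pair has actions differing by at least $\eps$.

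Suppose $K$ had at least $B+1$ vertices, and take $B+1$ consecutive ones $\gamma_1,\dots,\gamma_{B+1}$. Then
\[
 T_{\gamma_{B+1}}-T_{\gamma_1}<B\eps<\tfrac14.
\]
Set $x=T_{\gamma_1}\ge1$ by \eqref{eq:minperiod}. Then all these orbits have actions in $[x,x+\tfrac14]\subset[x,2x]$. This gives at least $B+1$ simple orbits in $[x,2x]$, contradicting \eqref{eq:dyadic}. The same argument also shows that every component is finite, since an infinite component would contain $B+1$ consecutive vertices. Hence $|K|\le B$.

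\textbf{Step 2: definition of $h$.} Within each component $K$, list the vertices in nondecreasing order of action as above. Break ties arbitrarily; ties cause no problem, because \eqref{eq:weightorder} only concerns strict inequalities. Define $h(\gamma)$ to be the position of $\gamma$ in this list. Then \eqref{eq:weightrange} follows from $|K|\le B$.

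\textbf{Step 3: monotonicity.} Suppose $0<T_\gamma-T_\delta<\eps$. Then $\gamma$ and $\delta$ are joined by an edge, so they lie in the same component. Since $T_\gamma>T_\delta$ strictly, $\gamma$ comes after $\delta$ in the ordering, whatever tie-breaking was chosen. Hence $h(\gamma)\ge h(\delta)+1$, which is \eqref{eq:weightorder}.

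The only real point in this argument is the chaining estimate in Step 1: a component with more than $B$ vertices would span an action window of length less than $B\eps$, and that window fits inside a dyadic interval $[x,2x]$. This uses $\eps<1/(4(B+1))$ together with $x\ge1$.
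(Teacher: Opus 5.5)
Your proof is correct and follows essentially the same route as the paper. You take the $B+1$ smallest-action vertices of a component, observe that their actions span less than $B\eps<1/4\le t_1$ so they all lie in $[t_1,2t_1]$, contradicting Lemma~\ref{lem:dyadic}, and then define $h$ as the rank by action within each component. You also spell out two points the paper leaves implicit: why consecutive actions within a component differ by less than $\eps$, and why components are finite.
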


\begin{proof}[\bf Proof of Lemma~\ref{lem:weights}]
Suppose that one component has at least $B+1$ vertices. The actions are
at least $1$ and are locally finite. We list the first $B+1$ actions as
\[
 t_1\le t_2\le\cdots\le t_{B+1}.
\]
Note that for each $i$, one has $t_{i+1}-t_i<\eps$. Therefore
\[
 t_{B+1}-t_1<B\eps<\tfrac14<t_1.
\]

Thus $t_{B+1}<2t_1$.

Then the interval $[t_1,2t_1]$ contains $B+1$ simple orbits. This
contradicts Lemma~\ref{lem:dyadic}. Thus each component has at most $B$
vertices.

In each component, we assign numbers in order of action. (If two actions
are equal, we choose either order.) The numbers are in between $1$ and $B$.
Two orbits which satisfy the hypothesis of \eqref{eq:weightorder} are in
the same component.
This proves the lemma.
\end{proof}

\section{The ends of genus zero curves}\label{sec:genuszero}

We next study the nontrivial component of a $U$-current when its genus
is zero.
\begin{lemma}\label{lem:projection}
Let $\Ccal=\Ccal_0\sqcup C$ be a current counted by the $U$-map under
\NH. If $g(C)=0$, then $\pi_Y:C\to Y$ is an embedding.
\end{lemma}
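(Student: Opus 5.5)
The plan is to use the standard criterion: for a somewhere injective curve $C$ in $\R\times Y$ that is not a trivial cylinder, $\pi_Y|_C$ is an embedding if and only if two conditions hold. First, $\pi_Y|_C$ is an immersion, equivalently the $\xi$-component $\pi_\xi\circ du$ has no zeros, i.e.\ $\operatorname{wind}_\pi(C)=0$ in the sense of Hofer--Wysocki--Zehnder. Second, $C$ is disjoint from its translates $C_s=C+s$ for every $s\neq0$. Both conditions will be checked with the positivity/counting formulas of Hofer--Wysocki--Zehnder and Siefring. The input is that $C$ is embedded with $I(C)=\ind(C)=2$ and $g(C)=0$, and that all ends are at covers of negative hyperbolic orbits of multiplicity one or two, given by the partition conditions.

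\textbf{Step 1 (immersion).} I will use the HWZ identity
\[
\operatorname{wind}_\pi(C)=\operatorname{wind}_\infty(C)-\chi(\dot\Sigma)+\#\Gamma ,
\]
with $\operatorname{wind}_\infty$ the sum of asymptotic winding numbers at positive ends minus those at negative ends. For a hyperbolic orbit $\gamma^q$, the relevant eigenfunction winding is $\lfloor\mu_\tau(\gamma^q)/2\rfloor$ at positive ends and $\lceil\mu_\tau(\gamma^q)/2\rceil$ at negative ends. These bounds are sharp for hyperbolic orbits, because the two extremal windings coincide in the even case and differ by one in the odd case. Substituting into \eqref{eq:fredholm} with $\chi=2-\#\Gamma$ should give
\[
2\operatorname{wind}_\pi(C)\le\ind(C)-2+2g(C)+\#\Gamma_{\mathrm{even}},
\]
where $\Gamma_{\mathrm{even}}$ is the set of ends with even Conley--Zehnder index. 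For a hyperbolic end, even CZ index does not contribute a defect. The point to verify carefully is that the even-index ends here are doubly covered negative hyperbolic ends, at which the winding is exactly $\mu_\tau/2$. Hence no extra term appears, and the right-hand side reduces to $\ind-2+2g=0$. Positivity of $\operatorname{wind}_\pi$ then forces $\operatorname{wind}_\pi(C)=0$, so $\pi_Y|_C$ is an immersion and $C$ is transverse to $\partial_s$.

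\textbf{Step 2 (injectivity).} For $s\neq0$, $C$ and $C_s$ are distinct and have no common component, since $C$ is not a trivial cylinder. By positivity of intersections it therefore suffices to show that Siefring's generalized intersection number satisfies $C*C_s=0$. This number is independent of $s$ and equals $Q_\tau(C)$ plus asymptotic linking corrections computed from the extremal windings at the ends. I plan to compute it by comparing with the relative adjunction formula
\[
c_\tau(C)=\chi(C)+Q_\tau(C)+w_\tau(C)-2\delta(C),
\]
with $\delta(C)=0$ since $C$ is embedded. The writhe bounds for ends of multiplicity at most two at negative hyperbolic orbits are equalities; this is again where the partition conditions enter. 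Combining the adjunction formula with \eqref{eq:topology} and $I=\ind=2$, $g=0$ should express $C*C_s$ as $2g(C)$ plus nonnegative defect terms, and those defect terms vanish for the given ends. This yields $C*C_s=0$. As $s\to0$, Step~1 rules out tangential intersections, so $C\cap C_s=\varnothing$ for all $s\neq0$.

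\textbf{Step 3 (ends).} It remains to check that distinct ends of $C$ do not have projections that intersect near infinity. Ends at distinct orbits are separated. For two ends at the same orbit, multiplicity one or two with the partition $(2,\dots,2,1)$, I will use that their asymptotic eigenfunctions have windings that do not allow crossing. This is the same asymptotic analysis that gave the vanishing linking terms in Step~2.

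The main obstacle is the bookkeeping in Step~2: the asymptotic correction terms in $C*C_s$ at doubly covered negative hyperbolic ends, and checking that they exactly cancel the corresponding terms in $w_\tau$ and $\mu_\tau$. I would first try to follow the analogous computation in \cite{CHP} for curves with $J_0$ controlled, specialized to the case in which all ends are hyperbolic.
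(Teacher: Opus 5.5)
Your route differs from the paper's: you unpack the embedding criterion by hand, using the Hofer--Wysocki--Zehnder bound $0\le\operatorname{wind}_\pi(C)\le c_N(C)$ and Siefring's adjunction formula $C*C=2(\delta+\delta_\infty)+c_N+(\bar\sigma-\#\Gamma)$ with $s$-invariance of $C*C_s$. The paper instead checks $c_N(C)=0$ and multiplicity one, then cites \cite[Proposition 3.3]{CHP}. Your route is viable in principle, but there is a genuine gap.

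\textbf{The gap: $c_N(C)=0$ is justified incorrectly.} You allow ends of multiplicity two. You then claim that even-index ends (doubly covered negative hyperbolic ends) add nothing, because their winding is exactly $\mu_\tau/2$. That is false. The term $\#\Gamma_{\mathrm{even}}$ in
\[
 2c_N(C)=\ind(C)-2+2g(C)+\#\Gamma_{\mathrm{even}}
\]
is not a winding defect. It arises because $\mu_\tau-2\alpha^\tau_-$ equals $0$ at an even end and $1$ at an odd end. An end with extremal winding $\mu_\tau/2$ makes the bound $\operatorname{wind}_\pi\le c_N$ sharp; it does not remove the term. With two doubly covered ends you would get $c_N(C)=1$, and neither $\operatorname{wind}_\pi(C)=0$ nor $C*C_s=0$ would follow.

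\textbf{The missing observation, and the fix.} Such ends cannot occur; this is the first sentence of the paper's proof. Under \NH\ every ECH generator has all multiplicities one. So the ends of $\Ccal$ at any orbit have total multiplicity at most one on each side. Hence every end of $C$ is simply covered at a negative hyperbolic orbit and has odd Conley--Zehnder index. This gives $\Gamma_{\mathrm{even}}=\varnothing$ and $c_N(C)=\tfrac12(\ind(C)-2+2g(C))=0$. Also, no two ends of the same sign share an orbit, so $\delta_\infty(C)=0$ and $\bar\sigma(C)=\#\Gamma$. With this correction your Steps 1 and 2 go through. Your approach buys self-containment; the paper's buys brevity.

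\textbf{Step 3 should be dropped, not proved as stated.} Once $C$ is embedded and $C\cap C_s=\varnothing$ for all $s\ne0$, injectivity of $\pi_Y|_C$ already follows. Here the only way two ends can share an orbit is a positive and a negative end at the same $\eta$. For that pair your non-crossing claim is false: the windings satisfy $w_+\le q<q+1\le w_-$, which forces the projections to cross. This is exactly the argument of Lemma~\ref{lem:oppositeends}. That configuration is excluded as a consequence of $C*C_s=0$: for large $s$, the positive end of $C$ and the negative end of $C_s$ at $\eta$ would be forced to intersect. It is not excluded by an asymptotic non-crossing argument.

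\textbf{Two small slips.}
\begin{enumerate}
\item If $\chi$ denotes $\chi(\dot\Sigma)$, the HWZ identity has no extra $+\#\Gamma$. Relative to $\tau$ it reads $\operatorname{wind}_\pi=c_\tau+\operatorname{wind}_\infty-\chi(\dot\Sigma)$.
\item The constant term in $C*C$ is $g(C)$, not $2g(C)$.
\end{enumerate}
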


\begin{proof}[\bf Proof of Lemma~\ref{lem:projection}]
Every end of $C$ is simply covered and negative hyperbolic.
Every curve in the same moduli component has the same ends
and relative class.
Its ECH index is  two, and  every such curve is embedded
in $\R\times Y$.

Since $g(C)=0$ and $\ind(C)=2$, the normal Chern number is zero.
The other hypotheses of \cite[Proposition 3.3]{CHP}
follow from multiplicity one.
Hence $\pi_Y:C\to Y$ is an embedding.
\end{proof}

\begin{lemma}\label{lem:oppositeends}
Under the assumptions of Lemma~\ref{lem:projection}, the same simple orbit
cannot occur as both a positive end and a negative end of $C$.
\end{lemma}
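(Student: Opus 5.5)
We argue by contradiction, using the embedding of Lemma~\ref{lem:projection}. Suppose a simple orbit $\gamma$ occurs both as a positive end and as a negative end of $C$. Under \NH, every end of $C$ is at a simple orbit with multiplicity one, because all multiplicities in $\alpha$ and $\beta$ are one and $\Ccal_0$ is disjoint from $C$. So $\gamma\in\alpha\cap\beta$, and $C$ has exactly one positive end and exactly one negative end asymptotic to $\gamma$. By Lemma~\ref{lem:projection}, $\pi_Y(C)$ is an embedded surface in $Y\setminus(\text{orbits})$. Its closure near $\gamma$ is bounded by two asymptotic ends, one approaching from $s\to+\infty$ and one from $s\to-\infty$.

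The first step is to describe the ends near $\gamma$. Each end of $C$ at the simply covered orbit $\gamma$ is asymptotic to an eigenfunction of the asymptotic operator. The positive end uses a negative eigenvalue and the negative end a positive eigenvalue. Since $\gamma$ is negative hyperbolic, $\mu_\tau(\gamma)=2n+1$ is odd. The winding numbers are then constrained by $\mu_\tau=2\lfloor\cdot\rfloor+1$: the positive end has winding $\le n$ and the negative end has winding $\ge n+1$. Thus the two ends of $\pi_Y(C)$ near $\gamma$ are, in a tubular neighbourhood, graphs over $\gamma$ of sections of $\xi|_\gamma$ with different winding numbers relative to $\tau$. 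For a negative hyperbolic orbit the extremal windings are $n$ and $n+1$ respectively. In fact, for the relevant eigenvalue the eigenfunction has a half-integer rotation, so both ends spiral with a fixed asymptotic direction that rotates by $\pi$ plus $n\cdot 2\pi$ per period.

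The second step is to derive a contradiction with embeddedness. Consider the $\R$-translates $C_c=C+c$ for $c\in\R$. Since $\pi_Y$ is injective on $C$, the curves $C$ and $C_c$ are disjoint for $c\ne0$; otherwise a point of $C$ and its translate would have the same projection. Hence the intersection number of $C$ with $C_c$ vanishes. Now compare the positive end of $C$ with the negative end of $C_c$ near $\gamma$. By the asymptotic analysis of Siefring, the local contribution of these two ends to the relative intersection pairing, or equivalently their linking, is governed by their winding numbers. Because $C$ approaches $\gamma$ from both sides with windings $w_+\le n<n+1\le w_-$, their projections to a small tube around $\gamma$ must cross. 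Concretely, on a torus $\partial N(\gamma)$ the two ends give curves in the classes $(1,w_+)$ and $(1,w_-)$, whose algebraic intersection is $w_--w_+\ge1\ne0$. Both are contained in the embedded surface $\pi_Y(C)$, so we get a self-intersection of $\pi_Y(C)$, which is a contradiction.

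The main obstacle is making this torus argument rigorous. One must show that $\pi_Y(C)\cap\partial N(\gamma)$ consists, for small $N$, exactly of the two embedded curves coming from the two ends. Other parts of $C$ might also meet $\partial N$ near $\gamma$, but since $C$ is properly embedded with finitely many ends, for a thin enough tube only the ends at $\gamma$ contribute. The curves from the two ends are disjoint by injectivity of $\pi_Y$, yet homologically they intersect $w_--w_+\neq0$ times. I would first try to carry this out using \cite[Proposition 3.3]{CHP} and the winding bounds from the partition conditions. If the windings coincide in degenerate bookkeeping, I would fall back on the asymptotic formula for the linking of the two ends.
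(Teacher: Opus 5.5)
Your proposal is correct and follows the paper's argument: the winding bounds $w_+\le q<q+1\le w_-$ at the negative hyperbolic orbit give curves of classes $(1,w_+)$ and $(1,w_-)$ on a thin torus around it, and their nonzero algebraic intersection $w_--w_+$ contradicts the embedding of Lemma~\ref{lem:projection}. The detour through the $\R$-translates $C+c$ and Siefring's theory is unnecessary, and you should drop the remark that the eigenfunctions have ``half-integer rotation'': the asymptotic eigenfunctions have integer winding numbers, and it is the linearized flow that rotates by $q+\tfrac12$ turns.
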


\begin{proof}[\bf Proof of Lemma~\ref{lem:oppositeends}]
Suppose that a simple negative hyperbolic orbit $\eta$ is both
a positive and a negative end of $C$.
Fix a trivialization $\tau$ along $\eta$.
Since $\eta$ is negative hyperbolic, we can write
$\mu_\tau(\eta)=2q+1$ for some $q\in\Z$.
Let $w_+$ and $w_-$ be the winding numbers of the positive and
negative ends with respect to $\tau$.
By \cite[Lemmas 6.4(c) and 6.6]{H1}, we have
\begin{equation}\label{eq:winding}
 w_+\le q,\qquad w_-\ge q+1.
\end{equation}

We take a sufficiently small torus around $\eta$.
The projections of the two ends meet this torus in closed curves.
We orient both curves in the direction of $\eta$.
Since both ends are simply covered, their homology classes are
$(1,w_+)$ and $(1,w_-)$ in the coordinates given by $\tau$.
Their algebraic intersection number is nonzero since $w_+\ne w_-$.
Hence the two curves intersect.
This contradicts Lemma~\ref{lem:projection}, since $\pi_Y(C)$ is embedded.
\end{proof}

Let $\alpha,\beta$ be ECH generators with $[\alpha]=[\beta]$. Put
\begin{equation}\label{eq:pms}
 P=\alpha\setminus\beta,\qquad M=\beta\setminus\alpha,
 \qquad S=\alpha\cap\beta,
 \qquad d=|P|+|M|.
\end{equation}
Each common orbit has multiplicity one. Hence it is either a trivial
cylinder or both a positive and a negative end of the nontrivial
component. Let $r$ be the number of common orbits of the second type.
Then $e(C)=d+2r$. Thus \eqref{eq:topology} becomes
\begin{equation}\label{eq:gr}
 J_0(\alpha,\beta)=2g(C)-2+d+2r.
\end{equation}

\begin{corollary}\label{cor:fixedends}
Suppose that one current counted by $\langle U\alpha,\beta\rangle$ has
a nontrivial component of genus zero. Then, for every generic base point,
each current counted for this pair has the form
\begin{equation}\label{eq:commonfixed}
 \Ccal=\left(\bigsqcup_{\eta\in S}\R\times\eta\right)\sqcup C,
 \qquad C:P\longrightarrow M,
\end{equation}
where $C$ has genus zero. Moreover,
\begin{equation}\label{eq:jfixed}
 J_0(\alpha,\beta)=d-2.
\end{equation}
\end{corollary}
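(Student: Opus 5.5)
The plan is to use the fact that $J_0(\alpha,\beta)$ depends only on the pair $(\alpha,\beta)$. Since $H_2(Y;\Z)=0$ under \NH, the relative class is unique, so \eqref{eq:gr} ties every current counted for this pair to the same number. Suppose $\Ccal'=\Ccal_0'\sqcup C'$ is the current whose nontrivial component has genus zero. Let $r'$ be its number of common orbits that are both positive and negative ends of $C'$. By Lemma~\ref{lem:oppositeends}, applied to $C'$ (the hypotheses of Lemma~\ref{lem:projection} hold since $g(C')=0$), no simple orbit is both a positive and a negative end of $C'$, so $r'=0$. Then \eqref{eq:gr} gives $J_0(\alpha,\beta)=2\cdot0-2+d+0=d-2$, which is \eqref{eq:jfixed}.

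Next take an arbitrary current $\Ccal=\Ccal_0\sqcup C$ counted by $\langle U_{J,z}\alpha,\beta\rangle$, for any generic $z$. Its nontrivial part is a single embedded irreducible curve by \eqref{eq:split}. Applying \eqref{eq:gr} to it gives
\[
 d-2=J_0(\alpha,\beta)=2g(C)-2+d+2r,
\]
so $2g(C)+2r=0$. Since $g(C)\ge0$ and $r\ge0$, we get $g(C)=0$ and $r=0$.

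The shape of $\Ccal$ now follows. Because every multiplicity in $\alpha$ and $\beta$ is one, each orbit of $S$ is either a trivial cylinder in $\Ccal_0$ or both a positive and a negative end of $C$. The second option is excluded by $r=0$, so $\Ccal_0=\bigsqcup_{\eta\in S}\R\times\eta$. The remaining ends of $C$ are then exactly $P$ at $+\infty$ and $M$ at $-\infty$, each simply covered. This gives \eqref{eq:commonfixed}.

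The one point needing care is that \eqref{eq:topology} applies to every counted current regardless of base point. That holds because it only uses multiplicity one in $\alpha$ and $\beta$, together with the structure $I(C)=\ind(C)=2$ from \eqref{eq:indtwo}. The independence of $z$ then comes for free, because $J_0(\alpha,\beta)$ is determined by the pair alone. I do not expect a genuine obstacle; the key input is Lemma~\ref{lem:oppositeends}, which gives $r'=0$ for the genus zero curve.
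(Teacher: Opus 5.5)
Your proposal is correct and follows the paper's proof essentially step for step. Lemma~\ref{lem:oppositeends} gives $r=0$ for the genus zero current, so \eqref{eq:gr} yields $J_0(\alpha,\beta)=d-2$; applying \eqref{eq:gr} again to an arbitrary counted current, at any generic base point, then forces $g(C)=r=0$, which gives \eqref{eq:commonfixed}.
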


\begin{proof}[\bf Proof of Corollary~\ref{cor:fixedends}]
For the given genus zero current, Lemma~\ref{lem:oppositeends} gives
$r=0$. Equation \eqref{eq:gr} then gives \eqref{eq:jfixed}. We apply \eqref{eq:gr} to any other
current for the same pair. Then
\[
 d-2=2g(C)-2+d+2r.
\]
Thus $g(C)=r=0$, and \eqref{eq:commonfixed} follows. 
\end{proof}

\section{A uniform bound on common orbits}\label{sec:common}
In this section, we prove a uniform
bound on the number of  the trivial cylinders in a $J$-holomorphic curve counted by the $U$-map

\begin{lemma}\label{lem:common}
Assume \NH. Suppose that
\[
 \langle U\alpha,\beta\rangle=1,
 \qquad 0<\A(\alpha)-\A(\beta)<\eps,
 \qquad J_0(\alpha,\beta)\le2.
\]
If a $J$-holomorphic  curve counted by $\langle U\alpha,\beta\rangle=1$ has a genus zero nontrivial
component, then
\begin{equation}\label{eq:commonbound}
 |\alpha\cap\beta|\le16B,
 \qquad |\alpha|,|\beta|\le M_0:=16B+4.
\end{equation}
Note that the constants $B$, $\eps$ and $M_0$ are independent of $\alpha$ and $\beta$.
\end{lemma}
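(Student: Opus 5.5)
Under the hypotheses, Corollary~\ref{cor:fixedends} gives $J_0(\alpha,\beta)=d-2\le 2$, so $d=|P|+|M|\le 4$ and every current counted by $\langle U\alpha,\beta\rangle$ is $\bigl(\bigsqcup_{\eta\in S}\R\times\eta\bigr)\sqcup C$ with $C:P\to M$ of genus zero. Since $|\alpha|=|S|+|P|$ and $|\beta|=|S|+|M|$, the second inequality in \eqref{eq:commonbound} follows from the first. So the plan is to bound $|S|$ by $16B$, following the introduction: degenerate the base point towards a common orbit and read off an action relation.

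Fix $\eta\in S$. Since $\partial_J=0$, the count $\langle U_{J,z}\alpha,\beta\rangle$ does not depend on the generic point $z$. Take a sequence $z_n\to z_\infty\in\eta$ and currents $\Ccal_n$ through $(0,z_n)$. Along the sequence, either the curves converge to an unbroken current through $(0,z_\infty)$, or they break. In the unbroken case, the limiting current is a $U$-current of index $2$. The point $(0,z_\infty)$ lies on $\R\times\eta$, but the nontrivial part is disjoint from the trivial cylinders. So the nontrivial component would have to pass through $\R\times\eta$ while $\eta$ is a trivial cylinder of the current, which I will try to rule out by positivity of intersections. The winding bounds \eqref{eq:winding} should also show that $C$ cannot approach $\eta$ without having an end there, and Corollary~\ref{cor:fixedends} forbids such an end.

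Hence, for a suitable choice, the limit breaks. Low ECH index compactness then gives two levels of index one, each with an embedded connected nontrivial part $u_1$ and $u_1'$, glued along an intermediate orbit set $\gamma$. The genus zero condition forces the two nontrivial parts to meet along exactly one end; otherwise the glued curve would acquire a handle. The point constraint forces $\eta$ to appear in this connecting orbit. The ECH partition conditions for negative hyperbolic orbits allow multiplicities $1$ or $2$, and $\gamma$ need not be an ECH generator. The parity of the ECH index (each level odd, with no positive hyperbolic orbits) should force the connecting end to be the double cover $\eta^2$. I expect this identification to be the main obstacle: excluding connections by simple orbits or by several ends requires a careful combination of the partition conditions, the index parity, and the genus count.

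Granting this, the action relation follows. The double cover $\eta^2$ is a negative end of the upper nontrivial component, which has positive ends in $P\cup\{\eta\}$, and a positive end of the lower one. The total $d\lambda$-area is $\A(\alpha)-\A(\beta)<\eps$, so both nontrivial parts have area less than $\eps$. The upper part has positive ends in $P$ plus possibly $\eta$, so
\[
2T_\eta < T_\eta+\sum_{p\in P}T_p+\eps.
\]
Hence $T_\eta<\sum_{p\in P}T_p+\eps$, and similarly $T_\eta$ is bounded below in terms of the actions of $M$. With $|P|\le 4$ and all actions at least $1$, every $\eta\in S$ satisfies $T_\eta\le 4\max_P T_p+1$. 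Since the $U$-map counts curves through a point, $P\neq\varnothing$, so this bound is meaningful. Lemma~\ref{lem:dyadic} gives at most $B$ orbits in each dyadic interval, and a window of ratio at most about $8$ covers a bounded number of such intervals. Refining the argument so that each of the at most $4$ ends of $C$ controls orbits lying in about $4$ dyadic intervals gives $|S|\le 16B$.
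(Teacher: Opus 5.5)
\textbf{There is a genuine gap in the final count.} Your reduction to bounding $|S|$ matches the paper, and so does the degeneration of the base point toward $\eta\in S$. The ingredients you list for identifying the connecting end are also the paper's. The genus zero tree structure gives one connection. The partition conditions give $m\in\{1,2\}$. The case $m=1$ is excluded because both end sets of the upper component would then be ECH generators of negative hyperbolic orbits, forcing $I$ to be even. The paper closes your ``granted'' step in exactly these few lines.

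\textbf{Why your count fails.} From the action relation you keep only an upper bound $T_\eta<\sum_{p\in P}T_p+\eps$ and an unspecified lower bound ``in terms of $M$''. You then try to place $S$ in a window of bounded ratio around $\max_P T_p$. No such window exists in general. The relevant quantity $\A(P')-\A(M')$ can be much smaller than $\max_P T_p$, for instance with $P'=\{p\}$, $M'=\{m\}$ and $T_p-T_m$ modest. So all you obtain is $S\subset[1,4\max_P T_p+1]$. Lemma~\ref{lem:dyadic} then bounds $|S|$ only by roughly $B\log_2(4\max_P T_p+1)$, which is not uniform in $\alpha,\beta$. The closing sentence about each end ``controlling about $4$ dyadic intervals'' has no argument behind it.

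\textbf{Two smaller errors in your displayed inequality.} First, it mixes up the two area bounds. The bound $\int_{C_\eta^+}d\lambda<\eps$ yields a \emph{lower} bound on $T_\eta$. The upper bound comes from positivity of area and needs no $\eps$. Second, $\eta$ cannot be a positive end of the upper component. The limit of $C_n:P\to M$ has all its positive ends in $P$, and $\eta\notin P$.

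\textbf{The missing idea.} Use the exact Stokes identity
$\int_{C_\eta^+}d\lambda=\A(P')-\A(M')-2T_\eta$,
where $P'\subset P$ and $M'\subset M$ are the remaining ends of $C_\eta^+$. Combine it with \emph{both} bounds
$0<\int_{C_\eta^+}d\lambda<\A(\alpha)-\A(\beta)<\eps$.
The upper bound holds because $C_\eta^-$ also has positive area. This traps $T_\eta$ in $\bigl((s-\eps)/2,\,s/2\bigr)$ with $s=\A(P')-\A(M')$. That interval has length $\eps/2$ and is determined by the pair $(P',M')$, of which there are at most $2^{|P|+|M|}\le16$.

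Suppose such an interval contains an action $T_\eta\ge1$, and let $a$ be its left endpoint. Then $a>1-\eps/2>1/2$, and $a+\eps/2<2a$ since $\eps<1/4$. So the interval lies in $[a,2a]$ and contains at most $B$ simple orbits by Lemma~\ref{lem:dyadic}. Hence $|S|\le16B$. The uniform bound comes from boundedly many \emph{short} intervals, not from a bounded-ratio window anchored at the actions of $P$.
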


\begin{proof}[\bf Proof of Lemma~\ref{lem:common}]
We use the notation $P,M,S,d$ in \eqref{eq:pms}.
By Corollary~\ref{cor:fixedends}, we have
$J_0(\alpha,\beta)=d-2$. Hence $d\le4$.
The nontrivial component has at least one positive end.
If it had no negative end, its $d\lambda$-area would be at least one.
This contradicts $\A(\alpha)-\A(\beta)<\eps<1$.
Therefore
\begin{equation}\label{eq:dfour}
 2\le d\le4.
\end{equation}

If $S=\varnothing$, the lemma follows.
Suppose that $S\ne\varnothing$, and fix $\eta\in S$.
We first show that there are subsets $P'\subset P$ and $M'\subset M$
such that
\begin{equation}\label{eq:targetinterval}
 0<\A(P')-\A(M')-2T_\eta<\eps.
\end{equation}
We then use Lemma~\ref{lem:dyadic} to count the orbits in $S$.

Take $x\in\eta$.
Let $z_n$ be generic points outside all periodic orbits such that
$z_n\to x$.
Since $\partial_J=0$, the chain map $U_{J,z}$ is independent of $z$.
Thus
\[
 \langle U_{J,z_n}\alpha,\beta\rangle=1
 \qquad\text{for every }n.
\]
For each $n$, take a current $\Ccal_n$ counted by this coefficient.
By Corollary~\ref{cor:fixedends}, we can write
\[
 \Ccal_n=
 \left(\bigsqcup_{\zeta\in S}\R\times\zeta\right)\sqcup C_n,
 \qquad C_n:P\longrightarrow M,
 \qquad g(C_n)=0.
\]
The point $(0,z_n)$ lies on $C_n$.
Also, $C_n$ is disjoint from every trivial cylinder in $\Ccal_n$.
In particular,
\begin{equation}\label{eq:avoid}
 C_n\cap(\R\times\eta)=\varnothing.
\end{equation}

We now consider the limit of $C_n$ as $z_n\to x$.
The ends $P,M$ and the genus are fixed.
Hence the Hofer energies are uniformly bounded.
Also, $I(C_n)=\ind(C_n)=2$.
By the compactness theorem for curves of low ECH index
\cite[Theorem 1.8(a)(ii)]{H1}
(see also \cite[Lemma 7.23]{HT1}),
we may take a subsequence and translations so that $C_n$
converges to a possibly broken curve.
Here $P,M$ are ECH generators, and $C_n$ does not contain any trivial cylinder
component.

Recall that $(0,z_n)\in C_n$ and $z_n\to x\in\eta$.
By \eqref{eq:avoid} and positivity of intersections, a nontrivial
component of the limit has an end at a cover of $\eta$.
Since $\eta\notin P\cup M$, the limit is broken.

By \cite[Proposition 7.15(c) and Lemma 7.23]{HT1}, the limit has two
embedded nontrivial components $C_\eta^+$ and $C_\eta^-$ of genus zero.
They lie in the top and bottom levels, respectively, and satisfy
\[
 I(C_\eta^+)=\ind(C_\eta^+)=1,
 \qquad I(C_\eta^-)=\ind(C_\eta^-)=1.
\]
All other components are covers of trivial cylinders.
The total Fredholm index is two.
By additivity and \cite[Lemma 1.7]{HT1}, these covers have
Fredholm index zero.
Since all Reeb orbits are hyperbolic, they are unbranched.

The arithmetic genus of the whole limit is zero, since $g(C_n)=0$.
Hence its dual graph is a tree.
After we omit the trivial cylinders, there is exactly one connection
between $C_\eta^+$ and $C_\eta^-$.
Since $\eta\notin P\cup M$, this connection is a cover of $\eta$.
We write it as $\eta^m$, where $m\ge1$.
Here $\eta^m$ is a negative end of $C_\eta^+$
and a positive end of $C_\eta^-$.
All other ends of the two components are external and simply covered.

There is only one end of $C_\eta^+$ at a cover of $\eta$.
Since $I(C_\eta^+)=\ind(C_\eta^+)=1$, the partition condition gives
$m\in\{1,2\}$.
Suppose that $m=1$.
Then the orbit sets at the positive and negative ends of $C_\eta^+$
are both ECH generators.
They contain only negative hyperbolic orbits.
Hence $I(C_\eta^+)$ is even by
\cite[Proposition 1.6(c)]{H1}.
This contradicts $I(C_\eta^+)=1$.
Therefore $m=2$.

The component $C_\eta^+$ has $\eta^2$ as a negative end.
Its other positive ends form a subset $P'\subset P$.
Its other negative ends form a subset $M'\subset M$.
By Stokes' theorem,
\begin{equation}\label{eq:arearelation}
 \int_{C_\eta^+}d\lambda
 =\A(P')-\A(M')-2T_\eta.
\end{equation}
Both $C_\eta^+$ and $C_\eta^-$ have positive $d\lambda$-area.
Trivial cylinders have zero $d\lambda$-area. Hence
\begin{equation}
 0<\int_{C_\eta^+}d\lambda
 <\A(\alpha)-\A(\beta)<\eps.
\end{equation}
This proves \eqref{eq:targetinterval}.

We now count the orbits in $S$.
For $P'\subset P$ and $M'\subset M$, put
\[
 s(P',M')=\A(P')-\A(M').
\]
Since $\eta\in S$ was arbitrary, every orbit in $S$ satisfies
\begin{equation}\label{eq:shortinterval}
 T_\eta\in
 \left(\frac{s(P',M')-\eps}{2},\frac{s(P',M')}{2}\right)
\end{equation}
for some pair $(P',M')$.
The sets $P,M$ are fixed, and $|P|+|M|\le4$.
Therefore the number of these intervals is at most
\[
 2^{|P|}2^{|M|}=2^d\le16.
\]

Consider one such interval which contains an action $T_\eta\ge1$.
Put $a=(s(P',M')-\eps)/2$.
Then $a+\eps/2>T_\eta\ge1$, so
\[
 a>1-\eps/2>1/2.
\]
Since $\eps<1/4$, we also have
\[
 a+\eps/2<2a.
\]
Hence this interval is contained in $[a,2a]$.
By Lemma~\ref{lem:dyadic}, at most $B$ simple orbits have actions in it.
There are at most 16 intervals.
Therefore $|S|\le16B$.
Finally,
\[
 |\alpha|=|P|+|S|\le4+16B,
 \qquad |\beta|=|M|+|S|\le4+16B.
\]
This completes the proof.
\end{proof}

\section{A linear action bound for the length of a \texorpdfstring{$U$}{U}-sequence}
\label{sec:linear}

We now combine Lemma~\ref{lem:common} with the bounded weights from
Lemma~\ref{lem:weights}.

\begin{proposition}\label{prop:linear}
Assume \NH. Fix a sequence of ECH generators
$\{\alpha_k\}_{k\ge0}$ in one homology class and in sufficiently high
degrees. Suppose that $U\alpha_k=\alpha_{k-1}$ for $k\ge1$.
Put $L=\A(\alpha_K)$. Then there is a constant $C>0$, independent of $K$,
such that
\begin{equation}\label{eq:linearbound}
 K\le CL
\end{equation}
for all sufficiently large $K$.
\end{proposition}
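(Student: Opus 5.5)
The plan is a potential-function argument, following the outline in the introduction. For an ECH generator $\alpha$ define the weight $W(\alpha)=\sum_{\gamma\in\alpha}h(\gamma)$, with $h$ from Lemma~\ref{lem:weights}. Since $1\le h\le B$ and every simple orbit has action at least $1$ by \eqref{eq:minperiod}, we get $0\le W(\alpha)\le B|\alpha|\le B\A(\alpha)$. The steps $k\to k-1$ with $1\le k\le K$ will be classified into \emph{good} steps, where $W$ drops by at least one, and \emph{bad} steps. The goal is to show that both the number of bad steps and the total increase of $W$ over bad steps are $O(L)$. Telescoping then gives
\[
\#\{\text{good}\}\le W(\alpha_K)+\sum_{\text{bad}}\bigl(W(\alpha_{k-1})-W(\alpha_k)\bigr)_+\le BL+O(L),
\]
and hence $K\le CL$.

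First I would bound the number of bad steps by action. Each step has action drop $a_k:=\A(\alpha_k)-\A(\alpha_{k-1})>0$, and $\sum a_k\le L$, so at most $L/\eps$ steps have $a_k\ge\eps$. Next I would use the index. Since $I(\alpha_k,\alpha_{k-1})=2$, additivity of $I$ and of $J_0$ together with Proposition~\ref{prop:ij}, applied to the pair $(\alpha_K,\alpha_0)$, give
\[
\sum_k J_0(\alpha_k,\alpha_{k-1})=J_0(\alpha_K,\alpha_0)\le 2K+C_*L.
\]
By \eqref{eq:jlower}, each term $J_0(\alpha_k,\alpha_{k-1})+1$ is nonnegative. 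So the number of steps with $J_0\ge3$ satisfies $\#\{J_0\ge3\}\cdot 4\le \sum_k (J_0+1)\le 3K+C_*L$. This alone is too weak. The fix is to pair it with \eqref{eq:topology}: $J_0\ge 3$ forces large genus or many ends, and it is more natural to bound $\sum_k(J_0(\alpha_k,\alpha_{k-1})-2)_+$. I would try to get this from the same telescoping combined with a lower bound $J_0\ge -1$ and the count of steps with $J_0\le 1$. The honest expectation is that a sharper input is needed at this point: either $J_0$ is exactly $2$ on average up to $O(L)$, or an ECH-index parity or genus argument, as in Corollary~\ref{cor:fixedends}, that forces $J_0=d-2+2g\ge 2$ for most steps.

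For the remaining steps, those with $a_k<\eps$, $J_0\le 2$, and genus zero, Corollary~\ref{cor:fixedends} gives $d\le4$ and Lemma~\ref{lem:common} gives $|\alpha_k|,|\alpha_{k-1}|\le M_0$. Hence $|W(\alpha_{k-1})-W(\alpha_k)|\le BM_0$ on such steps. Among these, the case $|P|=|M|=1$ is exactly the good case: the single replaced orbit has action less by under $\eps$, so \eqref{eq:weightorder} gives a drop of at least one. The cases $d=3,4$ or $|P|\ne|M|$ change $|\alpha|$ or $W$ by at most $4B$ each. I would control their number by the actions of $P$ and $M$: with $a_k<\eps<1$ and all orbits of action at least $1$, an unbalanced replacement needs $P$ and $M$ to have nearly equal total action, and I would try to charge such steps to a change in $|\alpha|$. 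This works because $\sum_k\bigl||\alpha_k|-|\alpha_{k-1}|\bigr|$ is controlled by $|\alpha_K|+|\alpha_0|$ plus twice the number of decreasing steps, which loops back again.

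Genus one steps with $J_0\le2$ have $d=2r+\ldots\le 2$, so they are handled similarly or are excluded by the area bound. The main obstacle I expect is the accounting for bad steps in the second paragraph, namely showing that the steps with large $J_0$ or with unbalanced $(P,M)$ contribute only $O(L)$ in total to the increase of $W$. For steps with large $J_0$ there is no bound on $|\alpha|$, but the increase of $W$ is still at most $B|M|$. Since $M$ consists of orbits with total action at most $\A(\alpha_{k-1})\le L$, this step needs a careful charging argument, which I would attempt by summing the action drops and the quantities $J_0-2$ together.
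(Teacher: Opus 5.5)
There is a genuine gap, and it is the one you flag yourself. You never show $\sum_k\bigl(J_0(\alpha_k,\alpha_{k-1})-2\bigr)_+=O(L)$, and without this neither the number of bad steps nor their total change of $W$ is controlled. Proposition~\ref{prop:ij} gives only $\bigl|\sum_k(J_0-2)\bigr|\le C_*L$. Since $J_0\ge-1$, what is really needed is that only $O(L)$ steps have $J_0\le1$. Calling the genus zero cylinders with $|P|=|M|=1$ good does not remove the problem. Each of them has $J_0=d-2=0$, so an unbounded number of them could offset an unbounded number of steps with large $J_0$. Each of those steps can change $W$ by as much as $B\,e\le B(J_0+2)$. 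Your suggested remedies (balancing $\A(P)$ against $\A(M)$, charging to changes of $|\alpha|$) are circular, as you observe.

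The missing ingredient is a counting argument that combines Lemma~\ref{lem:common} with Lemma~\ref{lem:dyadic}. Consider the steps with $J_0\le2$. At most $L/\eps$ of them have action drop at least $\eps$. If the action drop is less than $\eps$, the nontrivial component has $e\ge2$ ends, and $2g-2+e\le2$ leaves two cases.

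\emph{Genus one with $e=2$ and $J_0=2$.} This is a one-orbit exchange. It contributes nothing to $\sum(J_0-2)$, and $W$ drops by at least one by \eqref{eq:weightorder}.

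\emph{Genus zero.} Here Lemma~\ref{lem:common} gives $|\alpha_k|\le M_0$. The $\alpha_k$ are pairwise distinct, because their actions strictly increase. They are built from the $N(L)=O(\log L)$ simple orbits of action at most $L$. Hence there are at most $\sum_{q\le M_0}\binom{N(L)}{q}=O((\log L)^{M_0})$ such steps, whether or not $W$ drops on them.

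Thus the number $D$ of non-exchange steps with $J_0\le2$ is $O(L)$, and so $\sum_k(J_0-2)_+\le C_*L+3D=O(L)$. Now sum over the non-exchange steps, using $J_0+2\le5(J_0-2)$ when $J_0\ge3$ and $J_0+2\le4$ otherwise. This bounds their number by $O(L)$. Through $|\alpha_k\triangle\alpha_{k-1}|\le e\le J_0+2$, it also bounds their total change of $W$ by $O(L)$. With this in place your telescoping closes, and your bound $W(\alpha)\le B\A(\alpha)$ serves as well as the paper's $B\,N(L)$.
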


\begin{proof}[\bf Proof of Proposition~\ref{prop:linear}]
Fix one generic point $z$. For each $1\le k\le K$, choose a current
$\Ccal_k=\Ccal_{0,k}\sqcup C_k$ counted by
$\langle U_{J,z}\alpha_k,\alpha_{k-1}\rangle=1$. Set
\[
 E_k=\A(\alpha_k)-\A(\alpha_{k-1})>0,
 \qquad j_k=J_0(\Ccal_k).
\]
Write $g_k=g(C_k)$ and $e_k=e(C_k)$. We proceed in five steps.

\medskip
\noindent\textbf{Step 1. The action and index estimates.}
Note that
\begin{equation}\label{eq:energybudget}
 \sum_{k=1}^K E_k
 =\A(\alpha_K)-\A(\alpha_0)\le L.
\end{equation}

Additivity gives $I(\alpha_{K},\alpha_{0})=2K$ and $J_0(\alpha_{K},\alpha_{0})=\sum_kj_k$. Therefore it follows from Proposition~\ref{prop:ij} that 
\begin{equation}\label{eq:jbudget}
 \left|\sum_{k=1}^K(j_k-2)\right|\le C_*L.
\end{equation}

Also, \eqref{eq:topology} gives
\begin{equation}\label{eq:stepj}
 j_k=2g_k-2+e_k\ge-1.
\end{equation}

\medskip
\noindent\textbf{Step 2. The low-action steps.}
Suppose that $E_k<\eps$ and $j_k\le2$. Since $E_k<1$, the curve $C_k$
has a negative end as well as a positive end. Hence $e_k\ge2$.
The inequality $2g_k-2+e_k\le2$ gives the following two cases:
\[
 g_k=0,\quad e_k\le4;
 \qquad\text{or}\qquad
 g_k=1,\quad e_k=2,\quad j_k=2.
\]
In the second case, the curve has one positive and one negative end.
These are distinct simple orbits. Thus
\begin{equation}\label{eq:exchange}
\begin{split}
 \alpha_k&=S_k\cup\{\gamma_+\},\qquad
 \alpha_{k-1}=S_k\cup\{\gamma_-\},\\
 &0<T_{\gamma_+}-T_{\gamma_-}<\eps.
\end{split}
\end{equation}
Let $\Gcal$ be the set of indices for these genus one steps with $E_k<\eps$,
and let
\[
 \Bcal=\{1,\ldots,K\}\setminus\Gcal.
\]
We call the indices in $\Bcal$ exceptional. Every step in $\Gcal$ has
$j_k=2$. 

\medskip
\noindent\textbf{Step 3. Counting the exceptional steps with $j_k\le2$.}

Put
\[
 D=\#\{k\in\Bcal:j_k\le2\}.
\]
Such a step either has $E_k\ge\eps$ or has $E_k<\eps$ and $g_k=0$.
By \eqref{eq:energybudget}, there are at most $L/\eps$ steps of the
first type. For the second type, Lemma~\ref{lem:common} implies
$|\alpha_k|\le M_0$.

Every orbit in $\alpha_k$ has action at most $L$. Thus only $N(L)$ simple
orbits can occur. The ECH generators $\alpha_k$ are distinct because
their actions are strictly increasing. Therefore
\begin{equation}\label{eq:D}
\begin{split}
 D&\le\frac L\eps+\sum_{q=0}^{M_0}\binom {N(L)} q\\
  &\le\frac L\eps+C_0(1+N(L))^{M_0}=O(L+(\log L)^{M_{0}})=O(L).
\end{split}
\end{equation}

\medskip
\noindent\textbf{Step 4. The total cost of the exceptional steps.}
For a real number $x$, write $x_+=\max(x,0)$. We have $j_k-2=0$ on
$\Gcal$. Thus every negative contribution to \eqref{eq:jbudget} comes
from one of the $D$ steps above. By \eqref{eq:stepj}, each such
contribution is at least $-3$. Hence
\begin{equation}\label{eq:jpositive}
 \sum_{k=1}^K(j_k-2)_+\le C_*L+3D.
\end{equation}
Define
\begin{equation}\label{eq:Wdef}
 W=\sum_{k\in\Bcal}(j_k+2).
\end{equation}
If $j_k\ge3$, then $j_k+2\le5(j_k-2)$. If $j_k\le2$, then
$j_k+2\le4$. Consequently,
\begin{equation}\label{eq:Wbound}
\begin{split}
 W&\le5\sum_{k=1}^K(j_k-2)_++4D\\
  &\le5C_*L+19D=O(L).
\end{split}
\end{equation}
Since $j_k+2\ge1$, we also have
\begin{equation}\label{eq:Bcount}
 |\Bcal|\le W.
\end{equation}
Thus the sum of $j_k+2$ over the exceptional steps is $O(L)$. We use this
sum below to bound the number of orbits which these steps can change.

\medskip
\noindent\textbf{Step 5. The local weights bound the remaining steps.}
We use the function $h$ from Lemma~\ref{lem:weights}. It is fixed on the
entire set of simple periodic orbits. Put
\begin{equation}\label{eq:potential}
 \Phi(\alpha)=\sum_{\gamma\in\alpha}h(\gamma).
\end{equation}
For the generators considered here,
\begin{equation}\label{eq:phirange}
 0\le\Phi(\alpha_k)\le B \cdot N(L).
\end{equation}
For $k\in\Gcal$, equations \eqref{eq:exchange} and
\eqref{eq:weightorder} imply
\begin{equation}\label{eq:goodincrement}
 \Phi(\alpha_k)-\Phi(\alpha_{k-1})\ge1.
\end{equation}
For an arbitrary step, every orbit in the symmetric difference
$\alpha_k\triangle\alpha_{k-1}=(\alpha_{k}\backslash \alpha_{k-1 })\cup (\alpha_{k-1}\backslash \alpha_{k}) $ occurs as an end of $C_k$. Hence
\[
 |\alpha_k\triangle\alpha_{k-1}|\le e_k
 =j_k+2-2g_k\le j_k+2.
\]
The common orbits cancel in the difference of the two sums in
\eqref{eq:potential}. Since $h\le B$, we have
\begin{equation}\label{eq:allincrement}
 |\Phi(\alpha_k)-\Phi(\alpha_{k-1})|
 \le B(j_k+2).
\end{equation}

Since
$\{1,\ldots,K\}=\Gcal\sqcup\Bcal$,  
\begin{align*}
 |\Gcal|
 &=\sum_{k\in\Gcal}1
 \le\sum_{k\in\Gcal}(\Phi(\alpha_k)-\Phi(\alpha_{k-1}))\\
 &=\sum_{k=1}^{K}(\Phi(\alpha_k)-\Phi(\alpha_{k-1}))-\sum_{k\in\Bcal}(\Phi(\alpha_k)-\Phi(\alpha_{k-1}))\\
 &=\Phi(\alpha_K)-\Phi(\alpha_0)
    -\sum_{k\in\Bcal}\bigl(\Phi(\alpha_k)-\Phi(\alpha_{k-1})\bigr)\\
 &\le B\cdot N(L)+\sum_{k\in\Bcal}|\Phi(\alpha_k)-\Phi(\alpha_{k-1})|\\
 &\le B \cdot N(L)+B\sum_{k\in\Bcal}(j_k+2)\\
 &=B  \cdot N(L)+BW.
\end{align*}
Here we use
$\Phi(\alpha_K)-\Phi(\alpha_0)\le B N(L)$
and $-a\le |a|$.

This estimate and \eqref{eq:Bcount} give
\[
 K=|\Gcal|+|\Bcal|\le B \cdot N(L)+(B+1)W.
\]
Now \eqref{eq:logarithmic} and \eqref{eq:Wbound} give $K=O(L)$.
This proves Proposition~\ref{prop:linear}.
\end{proof}

\section{Proof of the main theorem}\label{sec:mainproof}

\begin{proof}[\bf Proof of Theorem~\ref{thm:negative}]
Suppose that \NH  holds. Choose a fixed $U$-sequence
$\{\alpha_k\}_{k\ge0}$ in sufficiently large degrees in the class
$\Gamma=0$, as in \eqref{eq:tower}. Put $L_K=\A(\alpha_K)$. By
Proposition~\ref{prop:linear},
\[
 \frac{K}{L_K}\le C
\]
for all sufficiently large $K$. On the other hand, the Weyl law
\eqref{eq:towerasymp} gives
\[
 \frac{L_K^2}{2K}\longrightarrow \Vol(Y,\lambda) >0.
\]
Thus $L_K\to\infty$ and
\[
 \frac{K}{L_K}
 =\frac{L_K}{L_K^2/K}\longrightarrow\infty.
\]
This is a contradiction. Therefore  the simple periodic orbits cannot
all be negative hyperbolic.
\end{proof}

\begin{proof}[\bf Proof of Theorem~\ref{thm:main}]
If $b_1(Y)>0$, the conclusion follows from \cite[Proposition 1.9]{CHP}.
Suppose that $b_1(Y)=0$. Then $c_1(\xi)$ is torsion.
By \cite[Theorem 1.4]{CHP}, there are either two or infinitely many
simple periodic orbits.
Since there are at least three, there are infinitely many.

If an elliptic orbit exists, \cite[Theorem 1.6]{S} gives a simple
positive hyperbolic orbit. Suppose that there is no elliptic orbit.
If there were no simple positive hyperbolic orbit either, all simple
orbits would be negative hyperbolic. This contradicts
Theorem~\ref{thm:negative}. This completes the proof.
\end{proof}

\end{document}